\long\def\symbolfootnote[#1]#2{\begingroup%
\def\thefootnote{\fnsymbol{footnote}}\footnote[#1]{#2}\endgroup}
\newcommand{\diag}{\textup{diag}}
\newcommand{\cross}{\times}
\newcommand{\tensor}{\otimes}
\def\imod#1{\allowbreak\mkern10mu({\operator@font mod}\,\,#1)}
\newtheorem{theorem}{Theorem}[section]
\newtheorem{lemma}[theorem]{Lemma}
\newtheorem{corollary}[theorem]{Corollary}
\newtheorem{proposition}[theorem]{Proposition}
\newtheorem*{theorem*}{Theorem}
\theoremstyle{definition}
\newtheorem{remark}[theorem]{Remark}
\numberwithin{equation}{section}
\newcommand{\ignore}[1]{}
\newcommand{\mynote}[1]{}
\begin{document}
\setcounter{section}{0}
\title{Real Elements in Spin Groups}
\author{Anupam Singh}
\address{The Institute of Mathematical Sciences\\ C.I.T. Campus Taramani, Chennai 600113 INDIA}
\email{anupamk18@gmail.com}
\date{}

\begin{abstract}
Let $F$ be a field of characteristic $\neq 2$.
Let $G$ be an algebraic group defined over $F$.
An element $t\in G(F)$ is called {\bf real} if there exists $s\in G(F)$ such that $sts^{-1}=t^{-1}$.
A semisimple element $t$ in $GL_n(F), SL_n(F), O(q), SO(q), Sp(2n)$ and the groups of type $G_2$ over $F$ is real if and only if $t=\tau_1\tau_2$ where $\tau_1^2=\pm 1=\tau_2^2$ (ref. \cite{st1,st2}).
In this paper we extend this result to the semisimple elements in $Spin$ groups when $\dim(V)\equiv 0,1,2 \imod 4$. 
\end{abstract}
\maketitle
{\it Keywords: Spin Groups, Clifford Groups, Real Elements etc.}
\section{Introduction}

Let $F$ be a field of characteristic $\neq 2$.
Let $G$ be a linear algebraic group defined over $F$.
An element $t\in G(F)$ is called {\bf real} if there exists $s\in G(F)$ such that $sts^{-1}=t^{-1}$.
In~\cite{st1} and~\cite{st2}, we studied real elements in classical groups and the groups of type $G_2$ and proved following result,
\begin{theorem}\label{st12}
Let $t$ be an element of $G$ of one of the following type: any element in $GL_n(F), SL_n(F)$ or groups of type $G_2$ defined over $F$ or a semisimple element in groups of type $A_1$, $O(q)$, $SO(q)$, or $Sp(2n)$.
Then $t$ is real in $G$ if and only if $t$ has a decomposition $t=\tau_1\tau_2$ where $\tau_1, \tau_2\in G$ and $\tau_1^2=\pm 1=\tau_2^2$.
\end{theorem}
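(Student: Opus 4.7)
The ``if'' direction is a one-line verification. If $t=\tau_1\tau_2$ with $\tau_1^2=\tau_2^2=\varepsilon\in\{\pm 1\}$, then $\tau_i^{-1}=\varepsilon\tau_i$, so
\[
\tau_1 t\tau_1^{-1}=\tau_1^2\tau_2\tau_1^{-1}=\varepsilon\tau_2(\varepsilon\tau_1)=\tau_2\tau_1,\qquad t^{-1}=\tau_2^{-1}\tau_1^{-1}=(\varepsilon\tau_2)(\varepsilon\tau_1)=\tau_2\tau_1,
\]
and these agree.

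For the ``only if'' direction, given $t\in G$ real, the task reduces to producing a single element $\tau_1\in G$ with $\tau_1^2\in\{\pm 1\}$ and $\tau_1 t\tau_1^{-1}=t^{-1}$: then $\tau_2:=\tau_1^{-1}t$ automatically satisfies $\tau_2^2=\tau_1^{-1}(t\tau_1^{-1})t=\tau_1^{-1}(\tau_1^{-1}t^{-1})t=\tau_1^{-2}=\tau_1^2$, which again lies in $\{\pm 1\}$.

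The construction of $\tau_1$ proceeds case by case but uses a common strategy: decompose the natural module $V$ of $G$ into $t$-stable subspaces indexed by pairs $\{\lambda,\lambda^{-1}\}$ of eigenvalues of the semisimple element $t$. In $GL_n$ and $SL_n$ the pairing is merely recorded from the reality hypothesis (which forces the invariant factors to be self-reciprocal), while for $O(q)$, $SO(q)$, $Sp(2n)$ it is forced by the bilinear form, which must send the $\lambda$-eigenspace to the $\lambda^{-1}$-eigenspace. On each paired block, $\tau_1$ is built as the explicit swap of the two eigenspaces; on fixed blocks ($\lambda=\pm 1$) it restricts to $\pm\mathrm{id}$ in a compatible way. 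The freedom to allow $\tau_1^2=-1$ instead of $+1$ is precisely what makes $\tau_1$ land in the \emph{special} subgroups ($SL$, $SO$), where the constraint $\det=1$ has to be respected. Groups of type $G_2$ are handled through the embedding $G_2\hookrightarrow SO(7)$, together with a check that $\tau_1$ preserves the octonion product; type $A_1$ is a direct $2\times 2$ calculation.

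The main obstacle is not the existence of an intertwiner in the ambient matrix algebra (this is automatic from reality), but rather arranging the intertwiner to sit inside the particular group $G$ and to have its square equal to a central element $\pm 1$. This requires careful bookkeeping of orthogonal or symplectic bases compatible with the eigenspace decomposition, and the details are carried out in~\cite{st1,st2}.
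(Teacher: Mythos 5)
The paper does not prove Theorem~\ref{st12}; it states it as a result imported verbatim from~\cite{st1,st2}, so there is no in-paper proof to compare your proposal against line by line. What the paper does supply is the remark immediately following the theorem — that the decomposition $t=\tau_1\tau_2$ with $\tau_1^2=\pm1=\tau_2^2$ is equivalent to the existence of a single $\tau\in G$ with $\tau^2=\pm1$ and $\tau t\tau^{-1}=t^{-1}$ — and your reduction step is exactly that observation, with the verification $\tau_2^2=\tau_1^{-2}=\tau_1^2$ done correctly. Your ``if'' computation is also correct (and implicitly shows why the two signs must agree: if $\tau_1^2=-\tau_2^2$ the same calculation gives $\tau_1t\tau_1^{-1}=-t^{-1}$).

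Your sketch of the ``only if'' direction — decompose the natural module into $\lambda\leftrightarrow\lambda^{-1}$ paired eigenspaces, build $\tau_1$ as a block swap, absorb the determinant obstruction by allowing $\tau_1^2=-1$ in the special subgroups, and treat $G_2$ through $SO(7)$ while preserving the octonion multiplication — is a fair summary of the strategy in~\cite{st1,st2}, but it is genuinely a sketch and you correctly flag that the case-by-case bookkeeping lives in those references, as the paper itself does. One imprecision worth fixing: the theorem covers \emph{all} elements of $GL_n(F)$, $SL_n(F)$, and the $G_2$ groups, not only semisimple ones, yet your description is phrased around eigenspaces of a semisimple $t$. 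You do gesture at the self-reciprocal invariant-factor formulation for $GL_n/SL_n$, which is the right repair there, but the $G_2$ case also needs an argument valid for unipotent and mixed elements, and that is where~\cite{st1} does substantial extra work that the eigenspace picture alone does not capture.
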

\noindent Notice that if $t\in G$ is real then $t$ has decomposition $t=\tau_1\tau_2$ with the property in the theorem if and only if there exists $\tau\in G$ such that $\tau t\tau^{-1}=t^{-1}$ and $\tau^2=\pm 1$.
In this paper, we study reality of semisimple elements in $Spin$ groups.  
In ~\cite{fz}, Feit and Zuckerman studied reality of elements in $Spin$ groups (for definitions of these groups see next section).
But they consider the conjugating element coming from the bigger group, namely, the group extended by Dynkin diagram automorphisms.
Let $F$ be an algebraically closed field or a finite field.
They prove (Theorem B, in~\cite{fz}),
\begin{theorem}
Any element of $Spin(V,q)$ is real in $\Gamma(V,q)$.
\end{theorem}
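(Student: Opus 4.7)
My strategy is to pass through the covering $\pi : \Gamma(V,q) \to O(V,q)$ (with kernel $F^*$, which restricts to a two-fold cover $Spin(V,q) \to SO(V,q)$ with kernel $\{\pm 1\}$), reducing reality of $t\in Spin(V,q)$ inside $\Gamma(V,q)$ to reality of its image $\bar t\in SO(V,q)$ inside $O(V,q)$, and then correcting a possible central sign. The starting input is a Wonenburger-type strengthening of Theorem~\ref{st12} to all of $O(V,q)$: every orthogonal transformation can be written as a product of two involutions. So I would write $\bar t=\sigma_1\sigma_2$ with $\sigma_i^2=1$, from which $\sigma_2 \bar t \sigma_2^{-1} = \bar t^{-1}$.

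Next I would expand $\sigma_2$ as a product of reflections in anisotropic vectors via Cartan--Dieudonn\'e, and lift this expression to $s = v_1 \cdots v_k \in \Gamma(V,q)$. Since the choice of lift differs only by the central factor $F^*$, conjugation is independent of it; by construction $\pi(sts^{-1}) = \bar t^{-1}$, so $sts^{-1} = \varepsilon\, t^{-1}$ for some $\varepsilon \in \{\pm 1\}$. If $\varepsilon = +1$, we are done. If $\varepsilon = -1$, I would replace $s$ by $sc$, where $c \in \Gamma(V,q)$ lies above some $\bar c \in C_{O(V,q)}(\bar t)$ and satisfies $c t c^{-1}= -t$ in $Spin(V,q)$; over an algebraically closed or finite field the centralizer is a product of classical groups acting on the eigenspaces of $\bar t$, so such a $c$ can typically be exhibited as an anisotropic vector, or a short product of such, lying in a suitable eigenspace, with the commutator computed directly inside the Clifford algebra.

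The main obstacle is precisely this sign-correction. It is a cohomological question about the restriction of the central extension $1\to\{\pm 1\}\to Spin(V,q)\to SO(V,q)\to 1$ to $C_{O(V,q)}(\bar t)$, namely whether its commutator pairing realizes the nontrivial class $-1$. If it fails for one involution decomposition $\bar t=\sigma_1\sigma_2$, one has to return to that decomposition and adapt it to the eigenvalue/root structure of $\bar t$. Carrying this out uniformly over all conjugacy classes of $SO(V,q)$ is where the delicate work lies, and is presumably the obstruction that in the author's subsequent extension to arbitrary fields forces the hypothesis $\dim V \equiv 0,1,2 \imod 4$.
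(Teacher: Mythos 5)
First, a point of context: the paper does not prove this statement at all --- it is Theorem~B of Feit and Zuckerman~\cite{fz}, quoted here only as background for comparison --- so there is no in-paper proof to measure your argument against. Assessing your plan on its own terms: the covering step is sound but underjustified. For a lift $s\in\Gamma(V,q)$ of a reversing element $\sigma_2$, knowing $\chi(sts^{-1})=\chi(t)^{-1}$ gives only $sts^{-1}=\lambda t^{-1}$ for some $\lambda\in F^*$; the sharpening to $\lambda\in\{\pm 1\}$ requires the norm map: $N$ is multiplicative, $N(t)=1$ since $t\in Spin(V,q)$, and $N(\lambda)=\lambda^2$, forcing $\lambda^2=1$. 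You assert the conclusion without this argument. A smaller wrinkle: when $\dim V$ is odd the image of $\chi$ is only $SO(q)$ (each $\chi(v)=-S_v$ already has determinant $(-1)^{n+1}=1$), so a reversing involution $\sigma_2\in O(q)\setminus SO(q)$ does not lift and must first be replaced by $-\sigma_2$.

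The genuine gap is the one you flag yourself. You reduce to producing $c\in\Gamma(V,q)$ with $\chi(c)\in C_{O(q)}(\bar t)$ and $ctc^{-1}=-t$, and then say only that such a $c$ ``can typically be exhibited'' and that ``carrying this out uniformly over all conjugacy classes \dots\ is where the delicate work lies.'' But that step \emph{is} the theorem. Whether the sign character on $C_{\chi(\Gamma)}(\bar t)$ --- sending $\bar c$ to the $\epsilon\in\{\pm 1\}$ with $ctc^{-1}=\epsilon t$ for a lift $c$ of $\bar c$ --- takes the value $-1$ depends on the eigenvalue structure of $\bar t$, and over a general field of characteristic $\neq 2$ it genuinely can fail; the entire remainder of the paper is devoted to exactly this obstruction inside $Spin$. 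Over algebraically closed and finite fields the statement is true, but showing it for \emph{every} conjugacy class (the theorem is not restricted to semisimple elements, so Jordan decomposition and unipotent parts must be handled as well) is precisely the content Feit--Zuckerman supply. As written, your proposal correctly locates the reduction and the difficulty but does not resolve it, so it is a plan rather than a proof.
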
 
\noindent From this theorem they deduce that if $\dim(V)$ is odd and $F$ is algebraically closed then every element of $Spin$ is real in $Spin$.
Since $Spin_3\cong SL_2$, one can produce unipotent elements which are not always real.
The results for $Spin$ groups similar to the results in Theorem~\ref{st12} are known over algebraically closed fields and finite fields. 
This we have collected in Proposition~\ref{realalgclosed} and~\ref{realfinitefield}.
In~\cite{tz} Tiep and Zalesski study the real conjugacy classes in groups of Lie type where they consider the $Spin$ groups also.
They produce examples of non-real elements (Section 6) but most of them are not semisimple.
For the general results about real elements in algebraic groups see~\cite{st2} Section 2.1 and 2.3 and Theorem 1.5 in \cite{tz}.

Let $F$ be a perfect field of characteristic $\neq 2$.
Let $V$ be a vector space with a quadratic form $q$ on it.
In this paper, we prove that all semisimple elements in $Spin(V,q)$ are real (over $F$) if $\dim(V)\equiv 0$ or $1 \imod 4$ (see Corollary~\ref{real01spin}).
We also prove that if  $\dim(V)\equiv 0,1$ or $2 \imod 4$ then every real semisimple element $t$ of $Spin(V,q)$ has a decomposition $t=\tau_1\tau_2$ where $\tau_1^2=\pm 1=\tau_2^2$ (Theorem~\ref{realspin}).
We (Corollary~\ref{counterexample}) also give examples when elements are not product of two involutions but they are product of elements of which square is $-1$.
Notice that the reality results for $SO(V,q)$ over $F$ does not lift to $Spin(V,q)$ (as it does over algebraically closed field, Proposition~\ref{realalgclosed}) since the image of $Spin(V,q)$ in $SO(V,q)$ is the subgroup $O'(V,q)$.
We first prove that for any semisimple element $t\in \Gamma^+(V,q)$, the even Clifford group, there exists $s\in \Gamma(V,q)$ such that $sts^{-1}=N(t)t^{-1}$ with $N(s)=1$ and $s^2=\pm 1$.
The investigation proceeds to find such conjugating element in the group $Spin(V,q)$.

\section{Clifford Groups and Spin Groups}

In this section we fix the notation and terminology (see~\cite{j} Section 4.8).
Let $V$ be an $n$-dimensional vector space over field $F$ with a quadratic form $q$.
We denote by $B$ the corresponding bilinear form.
Let 
$$
C(V,q)=\frac{T(V)}{\langle x\tensor x-q(x).1 \mid x\in V\rangle}
$$
be the {\bf Clifford algebra} where $T(V)$ is the tensor algebra.
Then $C(V,q)$ is $\mathbb Z/2\mathbb Z$-graded algebra, say, $C(V,q)=C_0(V,q)+C_1(V,q)$.
The subalgebra $C_0(V,q)$ is called {\bf special (or even) Clifford algebra} and it is a Clifford algebra in its own right.
The group $\Gamma(V,q)=\{u\in C(V,q)^{\cross}\mid uxu^{-1}\in V \ \forall x\in V\}$ is called {\bf Clifford group} and $\Gamma^+(V,q)=\Gamma(V,q)\cap C_0(V,q)$ is called {\bf special Clifford group}.
 
We have a representation for Clifford group, called vector representation, $\chi\colon \Gamma(V,q)\rightarrow O(V,q)$ defined by $v\mapsto -S_v$ for $v\in V\subset C(V,q)$ with $q(v)\neq 0$. 
Here $S_v$ is a reflection defined by $S_v(x)=x-\frac{B(v,x)}{q(v)}v$.
This gives rise to the following exact sequence:
$$1\rightarrow F^*\rightarrow \Gamma^+(V,q)\stackrel{\chi}{\rightarrow} SO(q)\rightarrow 1$$
and with this representation one can describe the even Clifford group as $\Gamma^+(V,q)=\{v_1\cdots v_{2r}\in C(V,q)\mid v_i\in V, \prod_{i=1}^{2r}q(v_i)\neq 0\}.$
The algebra $C(V,q)$ has a canonical involution $\tau$ defined by $\tau(v_1\cdots v_r)=v_r\cdots v_1$ which restricts to $C_0(V,q)$ and is denoted as $\tau_0$.
We have a map $N\colon \Gamma^+(V,q)\rightarrow F^*$, called the {\bf norm} map, defined by $N(u)=\tau_0(u)u$, i.e., if $u=v_1\cdots v_{2r}$ then $N(u)=\prod_{i=1}^{2r}q(v_i)$. 
The kernel of $N$ is called the {\bf Spin} group and denoted as $Spin(V,q)$, i.e.,  we have 
$$1\rightarrow \{\pm 1\} \rightarrow Spin(V,q)\rightarrow O'(V,q)\rightarrow 1$$
where $O'(V,q)$ is {\bf reduced orthogonal group}.
In the case $F$ is algebraically closed field we have $F^*=(F^*)^2$ and $O'(V,q)=SO(q)$.
The group $Spin$ is a simply connected and semisimple algebraic group defined over $F$ and is a double cover of $SO(q)$.
And the group $\Gamma^+$ is a connected reductive algebraic group defined over $F$.
We write all of the above relations in a diagram below.
\[\xymatrix@R=1pc{& 1\ar[d] & 1\ar[d] & 1\ar[d] &\\
1\ar[r] & \{\pm 1\}\ar[r]\ar[d]& Spin(V,q)\ar[r]\ar[d] & O'(V,q)\ar[r]\ar[d] & 1\\
1\ar[r] & F^* \ar[r]\ar[d]& \Gamma^+(V,q)\ar[r]^{\chi}\ar[d]^{N} & SO(V,q)\ar[r]\ar[d] & 1\\
1\ar[r] & (F^*)^2\ar[r]& F^*\ar[r] & F^*/(F^*)^2\ar[r] & 1\\
}
\]

Here we would like to study which semisimple elements are real in the group $Spin(V,q)$ and what is there structure.
Structure of real elements in $SO(q)$ has been studied in \cite{st2} (see Section 3.4) but same question has not been addressed for $O'(V,q)$ in the literature.

\section{A Maximal Torus in $\Gamma^+(V,q)$}\label{torus}

A semisimple element in a connected algebraic group belongs to a maximal torus. 
Hence we would like to give a description of a maximal torus  in $\Gamma^+(V,q)$ which suits our need.
In this section we consider a quadratic form $q$ of maximal Witt index over a field $F$ and describe a maximal torus in $\Gamma^+(V,q)$ defined over $F$ with respect to the fixed Witt basis.
By abuse of notation, we would refer this maximal torus as standard maximal torus.
We choose a basis of $V$ as follows.
If $\dim(V)=2m$ we choose the basis $\mathcal B=\{e_1,f_1,\ldots,e_m,f_m\}$ and if $\dim(V)=2m+1$ we take $\mathcal B=\{e_0,e_1,f_1,\ldots,e_m,f_m\}$ where $q(e_0)\neq 0, B(e_i,f_j)=\delta_{ij}, B(e_i,e_j)=0=B(f_i,f_j)\ \forall i,j$.
A maximal torus of $SO(q)$ with respect to this basis is given by $\{\diag(\lambda_1,\lambda_1^{-1},\ldots,\lambda_m,\lambda_m^{-1}) \mid \lambda_i\in F^*\}$ or $\{\diag(1,\lambda_1,\lambda_1^{-1},\ldots,\lambda_m,\lambda_m^{-1})\mid \lambda_i\in F^*\}$ depending on $\dim(V)$ is even or odd.
We wish to describe a maximal torus in the group $\Gamma^+(V,q)$ and $Spin(V,q)$ which maps to this torus in $SO(q)$ under $\chi$.
We begin with a simple computation first.   
\begin{lemma}
With the notation as above, the element $\prod_{i=1}^m(e_i+f_i)(e_i+\lambda_i f_i)\in \Gamma^+(V,q)$ maps to $\diag(\lambda_1,\lambda_1^{-1},\ldots,\lambda_m,\lambda_m^{-1})$ in $SO(q)$ if $\dim(V)$ is even and maps to the element $\diag(1,\lambda_1,\lambda_1^{-1},\ldots,\lambda_m,\lambda_m^{-1})$ if $\dim(V)$ is odd.
\end{lemma}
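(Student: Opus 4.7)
The plan is to proceed by direct computation in the Clifford algebra, using the explicit formula for $\chi$ together with the multiplicativity of $\chi$. First I would record the key simplification: since $\chi(v) = -S_v$ for a single anisotropic vector $v \in V$, and the element in question is a product of $2m$ such vectors, the signs cancel in pairs and
$$\chi\Big(\prod_{i=1}^m (e_i+f_i)(e_i+\lambda_i f_i)\Big) = \prod_{i=1}^m S_{e_i+f_i}\circ S_{e_i+\lambda_i f_i}.$$
This reduces the lemma to a calculation with reflections on $V$.

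Next I would compute the relevant quadratic values, namely $q(e_i+f_i) = B(e_i,f_i) = 1$ and $q(e_i+\lambda_i f_i) = \lambda_i$, which confirms the product lies in $\Gamma^+(V,q)$ (its norm is $\prod_i \lambda_i \neq 0$) and provides the denominators needed for the reflection formula $S_v(x) = x - \frac{B(v,x)}{q(v)}v$. The crucial point is that each pair of vectors $(e_i+f_i,\, e_i+\lambda_i f_i)$ spans the hyperbolic plane $H_i = \mathrm{span}(e_i,f_i)$, which is orthogonal under $B$ to $H_j$ for $j\neq i$ and to $e_0$ in the odd case. Consequently the reflections $S_{e_i+f_i}$ and $S_{e_i+\lambda_i f_i}$ both act as the identity off $H_i$, so the various factors commute on $V$ and it suffices to check the action of the $i$-th factor on $H_i$.

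Restricting to $H_i$, a short computation gives
$$S_{e_i+\lambda_i f_i}(e_i) = -\lambda_i f_i,\qquad S_{e_i+\lambda_i f_i}(f_i) = -\lambda_i^{-1} e_i,$$
$$S_{e_i+f_i}(e_i) = -f_i,\qquad S_{e_i+f_i}(f_i) = -e_i,$$
and composing in the correct order (apply $S_{e_i+\lambda_i f_i}$ first, since $\chi(u_1 u_2) = \chi(u_1)\circ \chi(u_2)$) yields $e_i \mapsto \lambda_i e_i$ and $f_i \mapsto \lambda_i^{-1} f_i$. Assembling the $m$ commuting factors produces the diagonal element $\mathrm{diag}(\lambda_1,\lambda_1^{-1},\ldots,\lambda_m,\lambda_m^{-1})$; in the odd-dimensional case the vector $e_0$ is fixed by every reflection involved, supplying the initial entry $1$.

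There is no real obstacle here, only some bookkeeping: the two items to be careful with are the order of composition (so as not to get the inverse torus element) and the observation that the factors indexed by different $i$ really do commute on $V$ because they act on mutually orthogonal hyperbolic planes, even though the underlying elements of $C(V,q)$ need not commute in the Clifford algebra.
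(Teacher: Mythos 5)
Your proof is correct and follows essentially the same route as the paper: compute $\chi$ on each factor $(e_i+f_i)(e_i+\lambda_i f_i)$ via the reflection formula, observe that the action is supported on the hyperbolic plane $\mathrm{span}(e_i,f_i)$, and multiply. The only cosmetic difference is that you absorb the sign $(-1)^{2m}=1$ globally up front, whereas the paper keeps the $-I$ blocks in each matrix factor and lets them cancel when the two matrices are multiplied; both routes are the same computation.
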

\begin{proof}
First we claim that the element $(e_i+f_i)(e_i+\lambda_i f_i)\in \Gamma^+(V,q)$ maps to the element $\diag(1,\ldots,\lambda_i,\lambda_i^{-1},\ldots,1)$ in $SO(q)$ under the map $\chi$.
Since 
$$S_{e_i+\beta f_i}(e_i)=-\beta f_i, S_{e_i+\beta f_i}(f_i)=-\frac{1}{\beta}e_i,$$ the vector $e_i+\beta f_i\in \Gamma(V,q)$ maps to $-S_{e_i+\beta f_i}=\diag\left(-I,\ldots,\left(\begin{array}{cc}0&\frac{1}{\beta}\\ \beta&0 \end{array}\right),\ldots,-I\right)$ in $O(q)$.
Hence,  
$\chi((e_i+f_i)(e_i+\lambda_i f_i))$
\begin{eqnarray*}
&=&\diag\left(-I,\ldots,\left(\begin{array}{cc}0&1\\ 1&0 \end{array}\right),\ldots,-I\right) \diag\left(-I,\ldots,\left(\begin{array}{cc}0&\lambda_i^{-1}\\ \lambda_i&0 \end{array}\right),\ldots,-I\right)\\
&=&\diag\left(I,\ldots,\left(\begin{array}{cc}\lambda_i&0\\ 0&\lambda_i^{-1} \end{array}\right),\ldots,I\right). 
\end{eqnarray*}
Hence the required result follows.
\end{proof}
We use this Lemma to describe a maximal torus in $\Gamma^+(V,q)$.
\begin{lemma}\label{torusincliffordgroup}
The set $T=\left\{\lambda_0\prod_{i=1}^m (e_i+f_i)(e_i+\lambda_i f_i)\in \Gamma^+(V,q)\mid \lambda_i\in F^*\right\}$ is a maximal torus in $\Gamma^+(V,q)$ defined over $F$.
And the set $T'=T\cap ker(N)=\{\lambda_0\prod_{i=1}^m (e_i+f_i)(e_i+\lambda_i f_i) \in \Gamma^+(V,q)\mid \lambda_0^2\prod_{i=1}^m\lambda_i=1, \lambda_i\in F^*\}$ contains a maximal torus of $Spin(V,q)$.
\end{lemma}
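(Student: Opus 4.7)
The plan is to verify by direct Clifford-algebra computation that $T$ is an algebraic subgroup of $\Gamma^+(V,q)$ isomorphic over $F$ to $(\mathbb{G}_m)^{m+1}$, and then to deduce maximality from the preceding lemma and the exact sequence $1\to F^*\to \Gamma^+(V,q)\stackrel{\chi}\to SO(q)\to 1$.

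Working in a single hyperbolic plane $\langle e_i,f_i\rangle$ and using $e_i^2=f_i^2=0$, $e_if_i+f_ie_i=1$, one rewrites
$$(e_i+f_i)(e_i+\lambda_i f_i)=1+(\lambda_i-1)\,e_if_i,$$
and observes that $e_if_i$ is idempotent, giving the multiplication rule
$$\bigl(1+(\lambda_i-1)e_if_i\bigr)\bigl(1+(\mu_i-1)e_if_i\bigr)=1+(\lambda_i\mu_i-1)e_if_i.$$
So for fixed $i$ these elements form a one-parameter subgroup isomorphic to $\mathbb{G}_m$. Using the anti-commutation relations $e_ie_j=-e_je_i$, $f_if_j=-f_jf_i$, $e_if_j=-f_je_i$ for $i\neq j$, the factors for distinct indices commute with each other and with the central scalars, so $T$ is an $F$-defined abelian subgroup isomorphic as an algebraic group to $(\mathbb{G}_m)^{m+1}$, hence an $F$-torus.

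For maximality, if $T_1\supseteq T$ is a torus in $\Gamma^+(V,q)$, then $\chi(T_1)\supseteq\chi(T)$ equals the standard maximal torus of $SO(q)$ by the preceding lemma, so $\chi(T_1)=\chi(T)$; since $\ker\chi=F^*\subset T$ (take $\lambda_i=1$ for all $i\geq 1$), we get $T_1\subset T\cdot F^*=T$, forcing $T_1=T$.

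For the Spin part, using $N(v_1\cdots v_{2r})=\prod q(v_i)$, $N(\lambda_0 u)=\lambda_0^2 N(u)$, together with $q(e_i+f_i)=1$ and $q(e_i+\lambda_i f_i)=\lambda_i$, one computes
$$N\!\left(\lambda_0\prod_{i=1}^m(e_i+f_i)(e_i+\lambda_i f_i)\right)=\lambda_0^2\prod_{i=1}^m\lambda_i.$$
Hence $T'=T\cap\ker N$ is precisely the displayed subset, and it is the kernel of the surjective character $N|_T\colon T\to F^*$. Its identity component is therefore a subtorus of dimension $m=\mathrm{rank}\,Spin(V,q)$, hence a maximal torus of $Spin(V,q)$ contained in $T'$. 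The only delicate step is the Clifford-algebra sign book-keeping needed to show that the factors for distinct indices commute; once that is in place, the maximality argument and the $Spin$-claim follow formally from the exact sequence and the norm formula.
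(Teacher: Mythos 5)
Your proof is correct and takes essentially the same route as the paper: verify the multiplication law $(e_i+f_i)(e_i+\lambda f_i)\cdot(e_i+f_i)(e_i+\mu f_i)=(e_i+f_i)(e_i+\lambda\mu f_i)$, observe that factors in distinct hyperbolic planes commute, so $T\cong(\mathbb{G}_m)^{m+1}$ over $F$, and then deduce maximality from the exact sequence $1\to F^*\to\Gamma^+\to SO(q)\to 1$ by noting $T=\chi^{-1}(\text{standard maximal torus})$. Your restatement via the idempotent $e_if_i$ (giving $(e_i+f_i)(e_i+\lambda f_i)=1+(\lambda-1)e_if_i$) is a nice and slightly more transparent way to see the multiplication rule and injectivity, and you are a bit more careful than the paper in stating the $Spin$ part — identifying $T'$ as the kernel of the character $N|_T$ corresponding to the vector $(2,1,\dots,1)$ and taking the identity component — but the underlying argument is the same.
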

\begin{proof}
We base change to $\bar F$ and define a map 
\begin{eqnarray*}
\phi\colon (\bar F^*)^{m+1} &\rightarrow& \bar T\subset \Gamma^+(\bar V,\bar q)\\
(\lambda_0,\lambda_1,\ldots,\lambda_m) &\mapsto& \lambda_0\prod_{i=1}^m (e_i+f_i)(e_i+\lambda_i f_i).
\end{eqnarray*} 
The map $\phi$ is an algebraic group homomorphism defined over $F$ since $(e_i+f_i)(e_i+\lambda f_i).(e_i+f_i)(e_i+\mu f_i)=\lambda\mu e_if_i+f_ie_i =(e_i+f_i)(e_i+\lambda\mu f_i)$ and the expression $(e_i+f_i)(e_i+\lambda f_i)$ in different $i$ commutes in $\Gamma^+(V,q)$ as they are orthogonal in $V$.
Hence $\bar T$ is a torus in $\Gamma^+(\bar V,\bar q)$ which is an inverse image of the (standard) maximal torus in $SO(\bar q)$ under the map $\chi$.
This implies that $T$ is a maximal torus in $\Gamma^+(V,q)$.
By looking at elements of norm $1$ in $T$ we get the required description of $T'$ in $Spin(V,q)$.
\end{proof}
\noindent We note that the set $S=\{\prod_{i=1}^m (e_i+\lambda_if_i)(e_i+\lambda_i^{-1} f_i)\in Spin(V,q)\mid \lambda_i\in F^*\}\}$ maps to $\{\diag(\lambda_1^{-2},\lambda_1^2,\ldots,\lambda_m^{-2},\lambda_m^2)\}$ also gives a description of a maximal torus in the $Spin$ group.
But this description is not useful for us.

\section{Involutions in $\Gamma^+(V,q)$}

Let $\tau$ be an involution in the orthogonal group $O(q)$.
Then there exists a nondegenerate subspace $W\subset V$ such that $\tau=-1|_{W}\oplus 1|_{W^{\perp}}$.
Hence the involutions in orthogonal groups are in one-one correspondence with the nondegenerate subspaces of $V$.
And an involution in $SO(q)$ corresponds to an even dimensional nondegenerate subspace of $V$.
We want to know when a nondegenerate subspace gives an involution in  $\Gamma^+(V,q)$.
\begin{proposition}
Let $\tau$ be an involution in $SO(V,q)$ which corresponds to a  nondegenerate subspace $W$ of dimension $2r$.
Then $\tau$ lifts to an involution in $\Gamma^+(V,q)$ if and only if $disc(W)=(-1)^r$ where $disc(W)=\prod_{i=1}^{2r} q(v_i)$ for some orthogonal basis $\{v_1,\ldots,v_{2r}\}$ of $W$.
\end{proposition}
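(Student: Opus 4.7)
The plan is to exhibit an explicit lift of $\tau$ to $\Gamma^+(V,q)$ and determine when it, or a scalar multiple of it, squares to $1$. Fix an orthogonal basis $\{v_1,\dots,v_{2r}\}$ of $W$ (it exists because $\mathrm{char}(F)\neq 2$ and $W$ is nondegenerate) and set $u=v_1v_2\cdots v_{2r}$. Since $q(v_i)\neq 0$ for every $i$, the description $\Gamma^+(V,q)=\{v_1\cdots v_{2r}\mid \prod q(v_i)\neq 0\}$ recalled in Section~2 places $u$ in $\Gamma^+(V,q)$. To check $\chi(u)=\tau$, use $\chi(v)=-S_v$ and the fact that the reflections $S_{v_i}$ commute pairwise (because the $v_i$ are mutually orthogonal); the composition $\chi(u)=(-1)^{2r}S_{v_1}\cdots S_{v_{2r}}$ fixes $W^\perp$ pointwise and sends each $v_i$ to $-v_i$, so it equals $\tau$.

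Next I would compute $u^2$ using only the defining relations $v_i^2=q(v_i)$ and $v_iv_j=-v_jv_i$ ($i\neq j$) of the Clifford algebra. Two facts suffice. First, reversing the order of $2r$ pairwise anticommuting factors costs a sign $(-1)^{\binom{2r}{2}}=(-1)^r$, so $\tau_0(u)=v_{2r}\cdots v_1=(-1)^r u$. Second, a cascade of cancellations from the middle outward gives $u\cdot \tau_0(u)=\prod_{i=1}^{2r}q(v_i)=disc(W)$, which is just the definition of the norm $N(u)$. Combining the two yields $u^2=(-1)^r\,disc(W)$.

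Finally, the exact sequence $1\to F^*\to\Gamma^+(V,q)\stackrel{\chi}{\to}SO(V,q)\to 1$ shows that every lift of $\tau$ has the form $\lambda u$ with $\lambda\in F^*$, and $(\lambda u)^2=\lambda^2(-1)^r\,disc(W)$. Such a lift is an involution iff $(-1)^r\,disc(W)=\lambda^{-2}$ for some $\lambda\in F^*$, i.e.\ iff $(-1)^r\,disc(W)\in(F^*)^2$; since $disc(W)$ is in any case only well-defined modulo $(F^*)^2$, this is precisely the equality $disc(W)=(-1)^r$ read in $F^*/(F^*)^2$. The only real obstacle is sign bookkeeping: one must distinguish the $(-1)^{2r}=1$ arising in $\chi(u)$ from the $(-1)^{\binom{2r}{2}}=(-1)^r$ arising in $\tau_0(u)$, and one should state explicitly that the asserted equality $disc(W)=(-1)^r$ is to be interpreted modulo squares, not literally in $F^*$.
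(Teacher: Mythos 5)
Your proof is correct and follows essentially the same route as the paper's: fix an orthogonal basis of $W$, take $u=v_1\cdots v_{2r}\in\Gamma^+(V,q)$, and compute $u^2=(-1)^r\,disc(W)$ (your derivation via $\tau_0(u)=(-1)^r u$ and $N(u)=disc(W)$ is a cosmetic repackaging of the paper's direct transposition count $(-1)^{\frac{2r(2r-1)}{2}}$). The one place you genuinely add something is at the end. The paper only verifies that the particular lift $u$ is an involution exactly when $\prod q(v_i)=(-1)^r$, and leaves implicit both the dependence on the choice of basis and the fact that any lift of $\tau$ is of the form $\lambda u$ with $\lambda\in F^*$. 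By examining $(\lambda u)^2=\lambda^2(-1)^r\,disc(W)$ you reduce the existence of an involutive lift to $(-1)^r\,disc(W)\in(F^*)^2$, which is the right formulation once one remembers that $disc(W)$ is only well-defined modulo squares. This closes the ``only if'' direction and makes explicit that the equality $disc(W)=(-1)^r$ in the statement is to be read in $F^*/(F^*)^2$, a point the paper does not spell out.
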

\begin{proof}
To the element $\tau$ we associate a non degenerate subspace $W$ of dimension $2r$ and choose an orthogonal basis $\{v_1,\ldots,v_{2r}\}$ of $W$.
We see that the element $u=v_1\cdots v_{2r}\in \Gamma^+(V,q)$ maps to $\tau$.
We claim that the element $u$ is an involution if and only if  $\prod_{i=1}^{2r}q(v_i)=(-1)^{r}$.
This follows from
 $$u^2=v_1\cdots v_{2r}v_1\cdots v_{2r}=(-1)^{\frac{2r(2r-1)}{2}}\prod_{i=1}^{2r}q(v_i)=(-1)^r\prod_{i=1}^{2r}q(v_i).$$
\end{proof}
In view of the Witt basis chosen in previous section the element $u=(e_1+f_1)\cdots (e_m+f_m)$ has $u^2=(-1)^{\frac{m(m-1)}{2}}$. 
In the case of odd dimension the element $u=e_0(e_1+f_1)\cdots (e_m+d^{-1}f_m)$  where $N(e_0)=d$ has $u^2=(-1)^{\frac{m(m+1)}{2}}$.

\section{Real Semisimple Elements from $SO(q)$ to $\Gamma^+(V,q)$}

Let $\tilde t\in SO(q)$ be a semisimple element and $t\in \Gamma^+(V,q)$ such that $\chi(t)=\tilde t$.
Suppose $\tilde t$ is real.
Then there exists $s\in \Gamma^+(V,q)$ such that $sts^{-1}=\alpha t^{-1}$ for some $\alpha\in F^*$.
If $F$ is an algebraically closed field, $\Gamma^+(V,q)\cong F^*.Spin(V,q)$ and hence one can always choose $s$ in the $Spin$ group.
In what follows we look for the conjugating element $s$ in $Spin$ group which conjugates $t$ to $N(t)t^{-1}$.  

We treat an element of $Spin$ group as an element of $\Gamma^+(V,q)\subset \Gamma(V,q)$ and often do the calculations there.
First we consider the form of maximal Witt index $m$ where $\dim(V)=2m$ or $2m+1$.
We fix a Witt basis of $V$ as described in Section~\ref{torus}, $\{e_1,f_1,\ldots,e_m,f_m\}$ in the case $\dim(V)$ even and $\{e_0,e_1,f_1,\ldots,e_m,f_m\}$ if $\dim(V)$ is odd.
Let $t\in \Gamma^+(V,q)$ be a semisimple element which belongs to the standard maximal torus $T$, i.e., the maximal torus described with respect to the fixed Witt basis (see Lemma~\ref{torusincliffordgroup}).
By the structure theory of quadratic forms such a basis always exists over algebraically closed field.  
Then we have,
\begin{lemma}\label{negativeconjugate}
Let $F$ be an algebraically closed field and $\dim(V)=2m$ or $2m+1$.
Let $t\in \Gamma^+(V,q)$ be a semisimple element.
Then,
\begin{enumerate} 
\item there exists $s\in\Gamma(V,q)$ with $s^2=(-1)^{\frac{m(m-1)}{2}}$ with $N(s)=1$ such that $sts^{-1}=N(t)t^{-1}$.
Moreover, $s\in\Gamma^+(V,q)$ if $m$ is even.
\item If $m$ is odd and suppose $-1$ is an eigen value of $\chi(t)$ then there exists $s\in \Gamma^+(V,q)$ with $N(s)=1$ and $s^2=(-1)^{\frac{(m-1)(m-2)}{2}}$ such that $sts^{-1}=-N(t)t^{-1}$.
\end{enumerate}
\end{lemma}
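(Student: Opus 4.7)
The plan is to pick a Witt basis for $V$ (possible since $F=\bar F$), use Lemma~\ref{torusincliffordgroup} to identify the standard maximal torus $T\subset\Gamma^+(V,q)$, and reduce to the case $t\in T$. Semisimple elements of the connected reductive group $\Gamma^+$ lie in maximal tori, which are all $\Gamma^+(\bar F)$-conjugate, so we lose nothing: conjugating $t$ by some $g\in\Gamma^+$ also conjugates any candidate $s$ by $g$ and preserves its square, its norm, and its parity. Writing $t_i:=(e_i+f_i)(e_i+\lambda_i f_i)$ and $t=\lambda_0\prod_{i=1}^m t_i\in T$, the formula $N(t_i)=q(e_i+f_i)q(e_i+\lambda_i f_i)=\lambda_i$ gives $N(t)=\lambda_0^2\prod_i\lambda_i$. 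The odd-dimensional case is handled uniformly since the torus description does not involve the extra basis vector $e_0$.

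For part~(1), I would take $s=(e_1+f_1)(e_2+f_2)\cdots(e_m+f_m)$, the element already flagged at the end of Section~4. Its basic properties are immediate from the Clifford relations: $N(s)=\prod_i q(e_i+f_i)=1$; the factors are pairwise orthogonal vectors and so pairwise anticommute, yielding $s^2=(-1)^{m(m-1)/2}$; and $s\in\Gamma^+$ exactly when $m$ is even. For the conjugation identity, observe that for $i\ne j$ the vector $(e_i+f_i)$ anticommutes with each of the two vectors composing $t_j$ and therefore commutes with $t_j$, so $st_js^{-1}$ reduces to
$$(e_j+f_j)\,t_j\,(e_j+f_j)^{-1}=(e_j+\lambda_j f_j)(e_j+f_j)=\lambda_j\, t_j^{-1}$$
(using $(e_j+f_j)^{2}=1$). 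Since the $t_i$ pairwise commute, combining these identities over $j$ gives $sts^{-1}=\lambda_0\prod_j\lambda_j\,t_j^{-1}=(\lambda_0^2\prod_j\lambda_j)\,t^{-1}=N(t)t^{-1}$.

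For part~(2), with $m$ odd and some $\lambda_j=-1$, I would drop the $j$th factor and take $s=\prod_{i\ne j}(e_i+f_i)$, a product of the even number $m-1$ of vectors, so $s\in\Gamma^+$; the same arguments give $N(s)=1$ and $s^2=(-1)^{(m-1)(m-2)/2}$. Now $s$ commutes entirely with $t_j$ and sends $t_i\mapsto\lambda_i t_i^{-1}$ for $i\ne j$, so
$$sts^{-1}=\lambda_0^2\Bigl(\prod_{i\ne j}\lambda_i\Bigr)t_j^2\,t^{-1}=-N(t)\,t^{-1},$$
where I use that $t_j^2=1$ whenever $\lambda_j=-1$ (a quick calculation in the Clifford algebra from $e_jf_j+f_je_j=1$, giving $t_j=1-2e_jf_j$ and $(e_jf_j)^2=e_jf_j$). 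The only subtlety beyond routine anticommutation bookkeeping is this small identity $t_j^2=1$ in part~(2); everything else is a direct consequence of having chosen the right explicit~$s$.
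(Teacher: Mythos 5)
Your proof is correct and follows essentially the same route as the paper: reduce to the standard torus, take $s=\prod_i(e_i+f_i)$ (resp.\ drop the factor corresponding to $\lambda_j=-1$), and exploit the anticommutation of orthogonal vectors. Your explicit computation $t_j=1-2e_jf_j$, $t_j^2=1$ in part~(2) is just a rewriting of the paper's observation that $(e_1+f_1)$ and $(e_1-f_1)$ anticommute, so $(e_1+f_1)(e_1-f_1)$ is its own inverse.
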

\begin{proof} Since $F$ is algebraically closed, with out loss of generality we may assume $t$ belongs to the standard maximal torus $T$.
Let $t$ maps to $\diag(\alpha_1,\alpha_1^{-1},\ldots,\alpha_m,\alpha_m^{-1})$ or $\diag(1,\alpha_1,\alpha_1^{-1},\ldots,\alpha_m,\alpha_m^{-1})$ in $SO(q)$ depending on $\dim(V)$ even or odd.
Recall from Lemma~\ref{torusincliffordgroup} we can write $t=\beta \prod_{i=1}^m(e_i+f_i)(e_i+\alpha_if_i)$.
We see that $N(t)=\beta^2\prod_{i=1}^m\alpha_i$ and $t^{-1}= \beta^{-1}\left(\prod_{i=1}^m\alpha_i^{-1}\right)\prod_{i=1}^m(e_i+\alpha_if_i)(e_i+f_i)$.
Let $s=\prod_{i=1}^m(e_i+f_i)$.
Then,
 
\begin{eqnarray*}
sts^{-1} &= & \prod_{i=1}^m(e_i+f_i). \beta \prod_{i=1}^m(e_i+f_i)(e_i+\alpha_if_i). \prod_{i=m}^1(e_i+f_i)\\
&=&\beta \prod_{i=1}^m(e_i+\alpha_if_i)(e_i+f_i)\\
&=& \beta^2.\left(\prod_{i=1}^m\alpha_i\right) t^{-1}\\
&=& N(t)t^{-1}.
\end{eqnarray*}
Also we check that $s^2= (-1)^{\frac{m(m-1)}{2}}$ and $N(s)=1$.

For the proof of second part we write $t=\beta (e_1+f_1)(e_1-f_1)\prod_{i=2}^m(e_i+f_i)(e_i+\alpha_if_i)$.
In this case we take $s=\prod_{i=2}^m(e_i+f_i)$ and notice that $(e_1+f_1)(e_1-f_1) = - (e_1-f_1)(e_1+f_1)= ((e_1+f_1)(e_1-f_1))^{-1} = -N((e_1+f_1)(e_1-f_1))((e_1+f_1)(e_1-f_1))^{-1} $.
Rest of the proof is similar to the calculations done above.
\end{proof}

\begin{lemma}
Let $(V,q)$ be a vector space which is direct sum of $(V_1,q_1)$ and $(V_2,q_2)$.
Let $t_i\in \Gamma^+(V_i,q_i)$.
Suppose there exist $s_i\in \Gamma(V_i,q_i)$ such that $s_it_is_i^{-1}=N(t_i)t_i^{-1}$.
Suppose $t=t_1t_2\in \Gamma^+(V,q)$ and $s=s_1s_2$. 
Then $sts^{-1}=N(t)t^{-1}$.
\end{lemma}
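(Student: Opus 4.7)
The plan is to exploit the fact that $V_1$ and $V_2$ are orthogonal, which in the Clifford algebra $C(V,q)$ translates to the anticommutation relation $v_1v_2 = -v_2v_1$ for $v_1\in V_1$, $v_2\in V_2$ (since $v_1v_2+v_2v_1 = 2B(v_1,v_2)=0$). A parity count then yields the commutation lemma I would establish first: for $i\neq j$, every element of $\Gamma(V_j,q_j)$ commutes with every element of $\Gamma^+(V_i,q_i)$, because moving a degree-$k$ word past a degree-$2r$ word produces a sign $(-1)^{2kr}=1$. In particular $t_1t_2 = t_2t_1$, each $s_i^{\pm 1}$ commutes with $t_j^{\pm 1}$ for $i\neq j$, and the scalar $N(t_j)\in F^*$ of course commutes with everything.

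Armed with this, I would carry out the direct manipulation
\[
sts^{-1} = s_1s_2\,t_1t_2\,s_2^{-1}s_1^{-1} = s_1t_1\bigl(s_2t_2s_2^{-1}\bigr)s_1^{-1} = N(t_2)\,s_1t_1s_1^{-1}\,t_2^{-1} = N(t_1)N(t_2)\,t_1^{-1}t_2^{-1},
\]
by first sliding $s_2$ past $t_1$, then applying the hypothesis $s_2t_2s_2^{-1} = N(t_2)t_2^{-1}$ in the $V_2$-summand, then pulling the scalar $N(t_2)$ to the front and sliding $t_2^{-1}$ past $s_1^{-1}$, and finally applying $s_1t_1s_1^{-1}=N(t_1)t_1^{-1}$ in the $V_1$-summand. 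Multiplicativity of $N$ on $\Gamma^+$ (which follows from $N(u)=\tau_0(u)u$ being central) gives $N(t_1)N(t_2)=N(t)$, and the commutation $t_1t_2=t_2t_1$ gives $t^{-1}=t_1^{-1}t_2^{-1}$, so the right-hand side equals $N(t)t^{-1}$.

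I do not anticipate any real obstacle: the whole content is the parity bookkeeping in the first step. The one subtlety worth flagging is that $s_1, s_2$ are only assumed to lie in $\Gamma$, not in $\Gamma^+$, so they may have odd degree; but because each $t_i$ has even degree, every cross-summand swap involves an even number of vector transpositions and therefore no sign is incurred. Once that is in place, the rest is a three-line computation.
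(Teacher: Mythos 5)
Your proposal is correct and matches the paper's approach: the paper's entire proof is the one-line observation that elements of $\Gamma^+(V_1,q_1)$ commute with elements of $\Gamma(V_2,q_2)$, which is exactly the parity-bookkeeping lemma you establish, and the remaining computation is the same. You simply spell out the details that the paper leaves implicit.
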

\begin{proof}
Notice that for $x\in \Gamma^+(V_1,q_1)$ and $y\in \Gamma(V_2,q_2)$, $xy=yx$.
\end{proof}
\begin{theorem}\label{conjugatecliffordgp}
Let dimension of $V$ be $2m$ or $2m+1$.
Then, any semisimple element $t\in \Gamma^+(V,q)$ is conjugate to $N(t)t^{-1}$ by an element $s\in \Gamma(V,q)$ such that $N(s)=1$ and $s^2=(-1)^{\frac{m(m-1)}{2}}$.
Moreover, the element $s\in \Gamma^+(V,q)$ if $m$ is even.
\end{theorem}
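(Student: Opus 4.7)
The plan is to reduce to Lemma~\ref{negativeconjugate} by decomposing $V$ orthogonally into $\bar t$-invariant nondegenerate pieces and combining the resulting conjugators via the direct-sum lemma just proved.

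Since $t$ is semisimple, so is $\bar t = \chi(t) \in SO(V,q)$, and its primary decomposition gives an orthogonal sum $V = \bigoplus_i V_i$ of nondegenerate $\bar t$-invariant subspaces, one for each self-reciprocal irreducible factor (including $x \pm 1$) or each reciprocal pair of factors of the minimal polynomial of $\bar t$. Correspondingly, $t = c \prod_i t_i$ with $t_i \in \Gamma^+(V_i, q_i)$ lifting $\bar t|_{V_i}$ and $c \in F^*$, and by the preceding direct-sum lemma it is enough to find for each $i$ an element $s_i \in \Gamma(V_i, q_i)$ satisfying $s_i t_i s_i^{-1} = N(t_i) t_i^{-1}$, $N(s_i) = 1$, together with sufficient control on $s_i^2$ and on the parity of its vector length.

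The reciprocal-pair pieces are hyperbolic over $F$, with $V_i = U_i \oplus U_i^*$ for totally isotropic, $\bar t$-stable, $B$-dual subspaces $U_i, U_i^*$. Choosing dual bases $\{u_j\}$ of $U_i$ and $\{u_j^*\}$ of $U_i^*$, the natural candidate is $s_i = \prod_j (u_j + u_j^*)$; a computation modelled verbatim on Lemma~\ref{negativeconjugate} gives $s_i t_i s_i^{-1} = N(t_i) t_i^{-1}$, $N(s_i) = 1$, and $s_i^2 = (-1)^{k_i(k_i-1)/2}$ with $k_i = \dim U_i$. For any self-reciprocal anisotropic piece the same Witt-basis construction can be performed over $\bar F$ and descended to $F$ using the natural Galois symmetry $\lambda \leftrightarrow \lambda^{-1}$ between conjugate eigenvalues. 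On a $\pm 1$-eigenspace the conjugation condition is essentially automatic, since $t_i$ is a scalar on $V_+$ and a lift of $-I$ on $V_-$, so a direct choice of $s_i$ with the correct invariants can be made (mirroring the odd-$m$ case of Lemma~\ref{negativeconjugate} for handling a $-1$-eigenvalue).

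Assembling $s = \prod_i s_i$ and invoking the direct-sum lemma yields $sts^{-1} = N(t) t^{-1}$ and $N(s) = 1$. The main obstacle is the global sign and parity bookkeeping: one must verify $\sum_i k_i(k_i-1)/2 \equiv m(m-1)/2 \pmod{2}$ (so that $s^2 = (-1)^{m(m-1)/2}$) and that the total number of vector factors in $s$ has the same parity as $m$ (determining when $s \in \Gamma^+(V,q)$). These identities reduce to a case-by-case summation of per-piece contributions using $\dim V = 2m$ or $2m+1$, together with a careful treatment of the self-reciprocal anisotropic pieces so that the parity calculation works uniformly over an arbitrary $F$.
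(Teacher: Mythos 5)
Your overall strategy matches the paper's: decompose $V$ into orthogonal $\bar t$-invariant pieces (eigenspace decomposition over $\bar F$, regrouped into Galois orbits for descent), invoke Lemma~\ref{negativeconjugate} on each piece, and assemble the conjugators via the preceding direct-sum lemma. The paper works entirely over $\bar F$ with 2-dimensional hyperbolic planes $\bar W_\lambda = \bar V_\lambda \oplus \bar V_{\lambda^{-1}}$ and then descends by grouping over the full Galois orbit of each $\lambda$; you phrase this via the primary decomposition of the minimal polynomial over $F$, which is the same decomposition described differently.

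However, there is a genuine error in your sign bookkeeping. You propose to verify the identity $\sum_i k_i(k_i-1)/2 \equiv m(m-1)/2 \pmod 2$ and then conclude $s^2 = \prod_i s_i^2$. This identity is false: take $k_1 = k_2 = 1$, $m = 2$, giving $0 \not\equiv 1$. The source of the error is that for $s = s_1\cdots s_r$ with $s_i$ a product of $k_i$ vectors in pairwise-orthogonal subspaces, the blocks anticommute vector-by-vector, so
$$s^2 = (-1)^{\sum_{i<j} k_i k_j}\,\prod_i s_i^2,$$
and you have dropped the cross-terms $\sum_{i<j} k_i k_j$. Once those are included, one gets (using $\sum_i k_i = m$) the purely algebraic identity
$$\sum_{i<j} k_i k_j + \sum_i \frac{k_i(k_i-1)}{2} = \frac{m(m-1)}{2},$$
which holds with no case analysis at all. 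The paper sidesteps this entirely by observing that $s$ is a product of exactly $m$ pairwise orthogonal vectors with $N(s)=1$, whence $s^2 = (-1)^{m(m-1)/2}$ is immediate by the standard Clifford computation; the block-by-block decomposition is used to build $s$ and to descend, but not to compute $s^2$. You should replace your "parity bookkeeping" step with this global length argument, or else carry the cross-terms, since as written the proposed verification would fail and the proof would appear to break.
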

\begin{proof}
Let us first assume $F=\bar F$, i.e., $F$ is an algebraically closed field.
Then the form $q$ has maximal Witt index and any semisimple element can be conjugated to an element in the standard torus $T$ with respect to a Witt basis. 
In this case the proof of the statement follows from Lemma~\ref{negativeconjugate}.
We use Galois theory argument, which is similar to~\cite{gp} Proposition 2.3, to deduce the general result over perfect fields.

Let $t\in \Gamma^+(V,q)$ be a semisimple element. 
We consider $\bar t\in\Gamma^+(\bar V,\bar q)$ where $\bar V=V\otimes \bar F$ and $\bar F$ denotes the algebraic closure of $F$.
We decompose $\bar V$ with respect to $\chi(\bar t)$ as follows:
$$\bar V=\bar V_1\oplus \bar V_{-1}\oplus_{\lambda\in \bar F^*}\bar W_{\lambda}$$ 
where $\bar W_{\lambda}=\bar V_{\lambda}\oplus \bar V_{\lambda^{-1}}$.
We notice that each of the subspace in this decomposition, except possibly $\bar V_1$, is of even dimension because $\chi(\bar t)\in SO(\bar q)$. 
We want to write the vector representation of $\bar t$ with respect to this decomposition. 
Let us denote the restriction of $\bar t$ to $\bar W_{\lambda}$ and $\bar V_{-1}$ by $\bar t_{\lambda}$ and $\bar t_{-1}$ respectively.
These representatives belong to the corresponding even Clifford groups, i.e., $\bar t_{\lambda}\in \Gamma^+(\bar W_{\lambda})$ and $\bar t_{-1}\in \Gamma^+(\bar V_{-1})$, as the dimensions of the respective subspaces are even.
Then $\bar t=\beta.\bar t_{-1}\prod_{\lambda}\bar t_{\lambda} $.
From Lemma~\ref{negativeconjugate} we can get $\bar s_{\lambda}$ with $N(\bar s_{\lambda})=1$ such that $\bar s_{\lambda}\bar t_{\lambda} \bar s_{\lambda}^{-1}= N(\bar t_{\lambda}) \bar t_{\lambda}^{-1}$ on each subspace corresponding to $\lambda$ and $-1$.

Let $\Lambda$ be the Galois group of $\bar F$ over $F$.
The subspace $W_{\Lambda\lambda}=\oplus_{\sigma\in\Lambda}\bar W_{\sigma\lambda}$ is defined over $F$ and the restriction of $\bar t$ to this subspace is $t_{\Lambda\lambda}=\oplus_{\sigma\in\Lambda}\bar t_{\sigma\lambda}$ where $\bar t_{\sigma\lambda}=\bar t|_{\bar W_{\sigma\lambda}}$.
Also $s_{\Lambda\lambda}=\prod_{\sigma\in\Lambda} \bar s_{\sigma\lambda}$ is defined over $F$ and hence $s=\bar s_{-1}\prod \bar s_{\lambda}=\bar s_{-1}\prod_{\lambda} s_{\Lambda\lambda}$ is defined over $F$ since $\bar V_{-1}$ is defined over $F$ and so is $\bar s_{-1}$.
Moreover $sts^{-1}=N(t)t^{-1}$ with $N(s)=1$ and length of $s$ in vector representation is $m$.
Hence the theorem follows.
\end{proof}

\begin{corollary}\label{real01spin}
Let $\dim(V)\equiv 0, 1 \imod 4$.
Then every semisimple element in $Spin(V,q)$ is real in $Spin(V,q)$.
\end{corollary}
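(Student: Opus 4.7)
The plan is to deduce this corollary directly from Theorem~\ref{conjugatecliffordgp} by simply tracking which parities of $\dim(V)$ force the conjugating element to already lie in $Spin(V,q)$. Write $\dim(V)=2m$ or $2m+1$. Given a semisimple $t\in Spin(V,q)\subset\Gamma^+(V,q)$, Theorem~\ref{conjugatecliffordgp} produces $s\in\Gamma(V,q)$ with $N(s)=1$, $s^2=(-1)^{m(m-1)/2}$, and $sts^{-1}=N(t)t^{-1}$. Because $t\in Spin(V,q)$ we have $N(t)=1$, so this collapses to $sts^{-1}=t^{-1}$.

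The only thing left to verify is that $s$ itself lies in $Spin(V,q)=\Gamma^+(V,q)\cap\ker(N)$. The norm condition $N(s)=1$ is already given, so the issue is membership in the \emph{even} Clifford group. Theorem~\ref{conjugatecliffordgp} asserts precisely that $s\in\Gamma^+(V,q)$ whenever $m$ is even. Now translating the hypothesis: $\dim(V)\equiv 0\imod 4$ means $\dim(V)=2m$ with $m$ even, and $\dim(V)\equiv 1\imod 4$ means $\dim(V)=2m+1$ with $m$ even. So in both cases covered by the corollary, $m$ is even, hence $s\in Spin(V,q)$ and $s$ witnesses the reality of $t$ inside $Spin(V,q)$.

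As a minor bookkeeping remark, under the same parity assumption one also gets $s^2=\pm 1$ (indeed $s^2=1$ when $m\equiv 0\imod 4$ and $s^2=-1$ when $m\equiv 2\imod 4$), which is consistent with the stronger conclusion of Theorem~\ref{realspin} asserted in the introduction. There is no genuine obstacle here: the work was already done in Theorem~\ref{conjugatecliffordgp}, and the corollary is essentially a parity check isolating the congruence classes of $\dim(V)$ for which the conjugator stays inside $\Gamma^+$ rather than only in $\Gamma$.
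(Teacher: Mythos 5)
Your proof is correct and is precisely the argument the paper intends: the corollary is stated without proof immediately after Theorem~\ref{conjugatecliffordgp}, and your parity check (both $\dim(V)\equiv 0$ and $\dim(V)\equiv 1\imod 4$ correspond to $m$ even, forcing $s\in\Gamma^+(V,q)$, hence $s\in Spin(V,q)$ once $N(s)=1$, while $N(t)=1$ collapses $sts^{-1}=N(t)t^{-1}$ to $sts^{-1}=t^{-1}$) is exactly the intended deduction.
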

\begin{remark}\label{minuseigenvalue}
With the notation in Theorem, suppose $m$ is odd.
In addition suppose $\chi(t)$ has eigen value $-1$ and hence the subspace $\bar V_{-1}$ is defined over $F$ which is even dimensional.
Then there exists $s\in \Gamma^+(V,q)$ with $N(s)=1$ such that $sts^{-1}=-N(t)t^{-1}$.
The proof of this follows from part $(2)$ of Lemma~\ref{negativeconjugate} with appropriate changes made in the proof of the Theorem.
\end{remark}

Notice that in the Theorem~\ref{conjugatecliffordgp}, the conjugating element $s$ need not belong to $\Gamma^+(V,q)$ always.
Hence we need to analyse the case when $m$ is odd.
Let us first consider $\dim(V)=2m$ and $m$ odd.
We recall a Theorem from~\cite{st2} (see the proof of Theorem 3.4.6 there),
\begin{theorem}\label{2mod4real}
If $\dim(V)=2m$ and $m$ odd, then a semisimple element $t\in SO(q)$ is real if and only if $1$ or $-1$ is an eigenvalue of $t$.  
\end{theorem}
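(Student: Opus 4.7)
The plan is to decompose $V$ into $t$-invariant nondegenerate subspaces and track the determinant of any element in $O(q)$ conjugating $t$ to $t^{-1}$; the content of the theorem is that in the absence of $\pm 1$-eigenspaces one is forced out of $SO(q)$, while the presence of either eigenspace provides enough flexibility to correct the determinant.

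First I would perform the primary decomposition of $t$ over $\bar F$,
$$\bar V=\bar V_1\oplus \bar V_{-1}\oplus \bigoplus_{\lambda\neq\pm 1}\bar W_\lambda,\qquad \bar W_\lambda=\bar V_\lambda\oplus \bar V_{\lambda^{-1}},$$
and, exactly as in the proof of Theorem~\ref{conjugatecliffordgp}, take Galois-invariant unions of the $\bar W_\lambda$ to obtain an $F$-rational orthogonal decomposition $V=V_1\oplus V_{-1}\oplus \bigoplus_{[\lambda]}W_{[\lambda]}$. This reduces the reality question to analyzing conjugators on each block, and the determinant of the assembled $s\in O(q)$ multiplies over the blocks.

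Next I would analyze the block $\bar W_\lambda$ for $\lambda\neq\pm 1$. The spaces $\bar V_\lambda$ and $\bar V_{\lambda^{-1}}$ are totally isotropic and put in perfect duality by $B$, of common dimension $d_\lambda$. Any element $s_\lambda\in O(\bar W_\lambda)$ with $s_\lambda \bar t s_\lambda^{-1}=\bar t^{-1}$ must swap these eigenspaces, so in dual bases it takes the block form $\bigl(\begin{smallmatrix}0 & \tra A^{-1}\\ A & 0\end{smallmatrix}\bigr)$ with $A\in GL_{d_\lambda}$; a short determinant computation shows $\det(s_\lambda)=(-1)^{d_\lambda}$, independent of $A$. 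On $V_{\pm 1}$ the restriction $\pm I$ is its own inverse, so any element of $O(V_{\pm 1})$ is a valid conjugator, and both determinants $\pm 1$ are attainable as long as $V_{\pm 1}\neq 0$ (use a reflection along an anisotropic vector, which exists in every nondegenerate quadratic space of positive dimension).

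The decisive computation is a parity count. From $\dim V_1+\dim V_{-1}+2\sum d_\lambda=2m$ and the fact that $\det(t)=1$ forces $\dim V_{-1}$ (and hence $\dim V_1$) even, we get $\sum d_\lambda\equiv m\imod 2$ whenever $V_1=V_{-1}=0$. Since $m$ is odd, the only available conjugator then has $\det(s)=\prod(-1)^{d_\lambda}=(-1)^m=-1$, so $s\in O(q)\setminus SO(q)$ and $t$ is not real in $SO(q)$. Conversely, if $1$ or $-1$ occurs as an eigenvalue, the free choice of $s_{V_{\pm 1}}\in O(V_{\pm 1})$ lets us prescribe its determinant, yielding a conjugator of determinant $+1$ and thus realness in $SO(q)$.

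The main obstacle I expect is bookkeeping for the Galois descent: one must check that grouping $\bar W_\lambda$ over a Galois orbit $[\lambda]$ still contributes $(-1)^{\sum_{\sigma\in[\lambda]}d_{\sigma\lambda}}$ to the determinant of an $F$-rational conjugator, and that the local conjugators $s_\lambda$ can be chosen Galois-equivariantly so that $s=\prod s_\lambda$ is defined over $F$. Once that is verified (mirroring the Galois argument already used in Theorem~\ref{conjugatecliffordgp}), the proof reduces to the parity count above.
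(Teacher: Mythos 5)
The paper does not actually prove Theorem~\ref{2mod4real}; it quotes it from \cite{st2} (Theorem 3.4.6), so there is no in-paper argument to compare against. Your determinant-parity argument is correct and is the natural proof. The key observation --- that any $s\in O(\bar q)$ with $s\bar t s^{-1}=\bar t^{-1}$ must stabilize each $\bar W_\lambda$, swap $\bar V_\lambda$ with $\bar V_{\lambda^{-1}}$, and hence have $\det(s|_{\bar W_\lambda})=(-1)^{d_\lambda}$ independently of the choice of $A$ --- checks out, and the parity count $(-1)^{\sum d_\lambda}=(-1)^m=-1$ when $V_1=V_{-1}=0$ is exactly the obstruction; since this holds for every conjugator over $\bar F$, it in particular rules out one in $SO(q)$. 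One simplification worth noting for the converse: the Galois-descent bookkeeping you flag can be bypassed. By Wonenburger's theorem every element of $O(q)$ over a field of characteristic $\neq 2$ is a product of two involutions in $O(q)$, so some $s\in O(q)$ with $sts^{-1}=t^{-1}$ always exists over $F$. Your parity analysis then pins down $\det(s)$ up to the free factor on $V_1\oplus V_{-1}$, and when that space is nonzero an $F$-rational reflection along an anisotropic vector there (which commutes with $t$) flips the sign, so one can arrange $\det(s)=1$. This removes the need to assemble the conjugator block by block over $F$ and confines the real work to the forced-determinant computation you already gave.
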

\noindent There are semisimple elements (e.g. strongly regular elements) in $SO(q)$ which are not real.
Hence its lift in $Spin(V,q)$ is also not real.

Now we consider the case when $\dim(V)=2m+1$ where $m$ is odd.
\begin{lemma}\label{oddcase}
Suppose $t\in \Gamma^+(V,q)$ is semisimple.
We have $\dim(V_1)\geq 1$.
Then, $t$ is conjugate to $N(t)t^{-1}$ by an element $s$ in $\Gamma^+(V,q)$ such that $N(s)=1$ if and only if $N(V_1)\subset N(\mathcal Z_{\Gamma^+(V,q)}(t))$.
\end{lemma}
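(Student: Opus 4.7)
The plan is to start from the $\Gamma(V,q)$-conjugator produced by Theorem~\ref{conjugatecliffordgp} and then adjust it by centralizer elements until it lands in $\Gamma^+(V,q)$. From that theorem we already have $s_0 \in \Gamma(V,q)$ with $N(s_0)=1$ and $s_0 t s_0^{-1}=N(t) t^{-1}$; by inspection of the construction, $s_0$ is a product of $m$ vectors, so for odd $m$ it lies in $\Gamma(V,q) \setminus \Gamma^+(V,q)$. Every other element sending $t$ to $N(t) t^{-1}$ has the form $s = s_0 z$ for some $z \in \mathcal Z_{\Gamma(V,q)}(t)$, and such an $s$ belongs to $\Gamma^+(V,q)$ with $N(s)=1$ precisely when $z \in \mathcal Z_{\Gamma(V,q)}(t) \setminus \Gamma^+(V,q)$ and $N(z)=1$. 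The lemma thus reduces to producing such a $z$.

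The key observation is that the anisotropic vectors of $V_1$ already live in the centralizer. Since $t \in \Gamma^+(V,q)$, conjugation by $t$ on $V \subset C(V,q)$ coincides with $\chi(t)$, so any $v \in V_1$ satisfies $tvt^{-1}=v$ and hence commutes with $t$ in $C(V,q)$. The restriction of $q$ to $V_1$ is nondegenerate (as $V_1$ is an orthogonal summand of $V$), and in characteristic $\neq 2$ this yields an anisotropic $v_0 \in V_1$ once $\dim(V_1) \geq 1$. Such a $v_0$ lies in $\mathcal Z_{\Gamma(V,q)}(t) \setminus \Gamma^+(V,q)$ with $N(v_0)=q(v_0)$. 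Fixing this $v_0$, any $z \in \mathcal Z_{\Gamma(V,q)}(t) \setminus \Gamma^+(V,q)$ decomposes as $z = v_0 w$ with $w := v_0^{-1} z \in \mathcal Z_{\Gamma^+(V,q)}(t)$, and $N(z) = q(v_0)\,N(w)$. Hence a $z$ with $N(z)=1$ exists if and only if $q(v_0) \in N(\mathcal Z_{\Gamma^+(V,q)}(t))$, using that the image is a subgroup of $F^*$.

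Finally I would promote this condition for a single $v_0$ to the stated condition $N(V_1) \subset N(\mathcal Z_{\Gamma^+(V,q)}(t))$. For any other anisotropic $v \in V_1$, the element $v_0 v$ is an even-length product of centralizing vectors and therefore lies in $\mathcal Z_{\Gamma^+(V,q)}(t)$; its norm is $q(v_0)q(v)$, so $q(v) = q(v_0 v)/q(v_0) \in N(\mathcal Z_{\Gamma^+(V,q)}(t))$. The reverse implication is immediate, since any anisotropic $v_0 \in V_1$ feeds directly into the previous paragraph. The proof is essentially bookkeeping once the inclusion of the anisotropic vectors of $V_1$ into $\mathcal Z_{\Gamma(V,q)}(t)$ is in hand; the only delicate point is tracking the parity of Clifford elements so that $s = s_0 z$ actually lands in $\Gamma^+(V,q)$.
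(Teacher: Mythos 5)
Your argument is correct and follows essentially the same route as the paper's: obtain the odd-length conjugator $s_0$ from Theorem~\ref{conjugatecliffordgp}, observe that anisotropic vectors of $V_1$ are odd elements of $\mathcal Z_{\Gamma(V,q)}(t)$, and reduce the question to a norm condition by comparing elements within the centralizer coset $s_0\,\mathcal Z_{\Gamma(V,q)}(t)$. You actually supply one justification the paper leaves implicit — the passage from ``$q(e_0)\in N(\mathcal Z_{\Gamma^+(V,q)}(t))$ for a single chosen $e_0$'' to ``$N(V_1)\subset N(\mathcal Z_{\Gamma^+(V,q)}(t))$'' — by noting that $v_0v\in\mathcal Z_{\Gamma^+(V,q)}(t)$ for any anisotropic $v\in V_1$, so the quotient $q(v)=N(v_0v)/q(v_0)$ stays in the norm subgroup.
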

\begin{proof}
The element $t$ maps to $\diag(1,\lambda_1,\lambda_1^{-1},\ldots,\lambda_m,\lambda_m^{-1})$.
From the theorem above we get $s_1\in \Gamma(V,q)$ of length $m$ and $N(s_1)=1$ such that $s_1ts_1^{-1}=N(t)t^{-1}$.
We take $e_0\in V_1$ and consider $s=e_0s_1\in \Gamma^+(V,q)$.
Then $sts^{-1}=N(t)t^{-1}$ and $N(s)=N(e_0)=d$, say.
Suppose $\tilde s$ is another element with same property such that $N(\tilde s)=1$.
Then $\tilde s\in s\mathcal Z_{\Gamma^+}(t)$, say $\tilde s=sy$.
This implies $N(y)=d^{-1}$, i.e., $N(V_1)\subset N(\mathcal Z_{\Gamma^+(V,q)}(t))$.

Conversely, let $N(V_1)\subset N(\mathcal Z_{\Gamma^+(V,q)}(t))$.
Then there exists $e_0\in V_1$ and $s=e_0s_1$, as above, conjugates $t$ to $N(t)t^{-1}$.
Let $N(e_0)^{-1}=N(y)$ for some $y\in \mathcal Z_{\Gamma^+(V,q)}(t)$.
We consider $\tilde s=sy$ then $\tilde s$ has all required properties.
\end{proof}

\begin{theorem}\label{splittorus}
Let $n=2m+1$ where $m$ is odd.
Let $t$ be a semisimple element such that it has an eigenspace defined over $F$ of dimension more than $1$.
Then $t$ is conjugate to $N(t)t^{-1}$ by an element of $Spin(V,q)$. 
\end{theorem}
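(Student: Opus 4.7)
The plan is to apply Lemma~\ref{oddcase}, which reduces the theorem to establishing the inclusion $N(V_1)\subset N(\mathcal Z_{\Gamma^+(V,q)}(t))$, and then to use the hypothesized $F$-eigenspace to produce the necessary centralizer elements. As in the proof of Theorem~\ref{conjugatecliffordgp}, I first decompose $V=V_1\oplus V_{-1}\oplus \bigoplus_\mu W_\mu$ over $F$ according to the eigenvalues of $\chi(t)$, with $W_\mu=V_\mu\oplus V_{\mu^{-1}}$ Galois-stable for $\mu\neq\pm 1$. By hypothesis there exists $\lambda\in F^*$ with $\dim_F V_\lambda\geq 2$.

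The main case $\lambda\neq\pm 1$ is direct. Then $V_{\lambda^{-1}}$ is also defined over $F$ of the same dimension (the bilinear form pairs these eigenspaces nondegenerately), so $W_\lambda:=V_\lambda\oplus V_{\lambda^{-1}}$ is an $F$-rational hyperbolic subspace of dimension at least four. Applying Lemma~\ref{torusincliffordgroup} to $W_\lambda$ with respect to a Witt basis of $W_\lambda$ yields the standard maximal torus $T_\lambda$ of $\Gamma^+(W_\lambda,q|_{W_\lambda})$, whose norm map surjects onto $F^*$. Using that $\chi(t)$ is block-diagonal for the $F$-decomposition $V=W_\lambda\oplus W_\lambda^\perp$, write $t=\alpha\,\tilde t_1\,\tilde t_2$ with $\alpha\in F^*$, $\tilde t_1\in T_\lambda$, and $\tilde t_2\in\Gamma^+(W_\lambda^\perp)$. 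Any element of $T_\lambda$ commutes with $\tilde t_1$ (abelian torus), with $\tilde t_2$ by the standard parity argument (elements of $C_0(W_\lambda)$ commute with elements of $C_0(W_\lambda^\perp)$ since $W_\lambda\perp W_\lambda^\perp$), and with the scalar $\alpha$. Hence $T_\lambda\subset\mathcal Z_{\Gamma^+(V,q)}(t)$, giving $N(\mathcal Z_{\Gamma^+(V,q)}(t))=F^*$ and the desired containment trivially.

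The remaining case $\lambda=\pm 1$, in which one of $V_1,V_{-1}$ has $F$-dimension at least two, is where I expect the main obstacle to lie. Here the inclusion $\Gamma^+(V_\lambda)\subset\mathcal Z_{\Gamma^+(V,q)}(t)$ already contributes the norms $q(v)q(w)$ for nonisotropic $v,w\in V_\lambda$ together with the squares $(F^*)^2$ from central scalars, but this subgroup need not exhaust $N(V_1)$ when $V_\lambda$ is anisotropic over $F$. The strategy is to apply Theorem~\ref{conjugatecliffordgp} to the restriction $t|_{V_\lambda^\perp}$, obtaining an element $s''\in\Gamma(V_\lambda^\perp)$ with $N(s'')=1$ that conjugates $t|_{V_\lambda^\perp}$ to $N(t|_{V_\lambda^\perp})\,(t|_{V_\lambda^\perp})^{-1}$. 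When the parity of $\tfrac12\dim V_\lambda^\perp$ ensures $s''\in\Gamma^+(V_\lambda^\perp)$, the element $s''$ itself serves as the required conjugator in $Spin(V,q)$; otherwise one multiplies $s''$ by an appropriately chosen vector in $V_1$ (which commutes with $t$ and toggles the Clifford parity) and then adjusts the norm by an element of $\Gamma^+(V_\lambda)\subset\mathcal Z_{\Gamma^+(V,q)}(t)$. The principal difficulty is in this last situation with $V_\lambda$ anisotropic, where covering every class in $F^*/(F^*)^2$ that appears among the $q(v)$, $v\in V_1$, requires delicate bookkeeping of parities and of the Clifford-algebra identities used to assemble the centralizing element.
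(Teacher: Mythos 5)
Your route is genuinely different from the paper's. The paper gives a direct construction: it writes $t$ as $(e_1+f_1)(e_1+\alpha_1f_1)\cdots(e_r+f_r)(e_r+\alpha_rf_r)\tilde t$ with $\alpha_i\in F$, takes the conjugator $s=(e_1+f_1)\cdots(e_r+f_r)\tilde s$ of odd length from Theorem~\ref{conjugatecliffordgp}, and then replaces the first factor $(e_1+f_1)$ by $e_0(e_1+d^{-1}f_1)$ with $d=N(e_0)$; this toggles the parity, keeps the norm equal to $1$, and the replacement factor $(e_1+d^{-1}f_1)(e_1+f_1)$ lies in the torus of the $F$-rational hyperbolic plane $\langle e_1,f_1\rangle$, hence commutes with $t$. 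You instead pass through Lemma~\ref{oddcase} and verify the norm criterion $N(V_1)\subset N(\mathcal Z_{\Gamma^+(V,q)}(t))$ by exhibiting a maximal torus $T_\lambda\subset\mathcal Z(t)$ inside $\Gamma^+(W_\lambda)$ with $N(T_\lambda)=F^*$. For the case $\lambda\in F$, $\lambda\neq\pm1$, your argument is correct (indeed $\dim_F V_\lambda\geq 1$ already suffices, since $N$ on the torus of a single hyperbolic plane is already onto $F^*$), and it is in substance the same hyperbolic-plane observation that drives the paper's modification of $s$.

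The gap you flag is real: your argument does not cover the situation in which the only $F$-rational eigenspace of dimension $>1$ is $V_1$ (or $V_{-1}$) and that eigenspace is anisotropic over $F$, so that there is no $F$-rational hyperbolic plane on which $t$ acts with $F$-eigenvalues. In that situation $\Gamma^+(V_\lambda)$ contributes only products $q(v)q(w)(F^*)^2$ to $N(\mathcal Z(t))$, and it is not automatic that $q(e_0)$ itself lies in this group; the parity-fixing trick of multiplying by a single vector of $V_1$ runs into exactly the same obstruction, since it requires a vector of prescribed norm. So as written the proof is incomplete. You should be aware, though, that the paper's own proof has the same limitation: the opening sentence ``the element $t$ looks like $(e_1+f_1)(e_1+\alpha_1f_1)\cdots$ with $\alpha_i\in F$'' presupposes precisely the existence of such an $F$-rational hyperbolic plane with $F$-eigenvalues, which the stated hypothesis (an $F$-eigenspace of dimension $>1$) does not by itself guarantee when that eigenspace is $V_{\pm1}$ and anisotropic. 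Within the range of cases where the hypothesis does produce such a plane --- in particular $\lambda\in F\setminus\{\pm1\}$, and the split-torus application in the final Proposition --- both your argument and the paper's are sound.
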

\begin{proof}
Since it has an eigenspace defined over $F$ of dimension more than $1$ the element $t$ looks like $(e_1+f_1)(e_1+\alpha_1f_1)\cdots (e_r+f_r)(e_r+\alpha_rf_r)\tilde t$ where $\alpha_i\in F$.
In this case from Theorem~\ref{conjugatecliffordgp}, we get element $s=(e_1+f_1)\cdots(e_r+f_r) \tilde s\in \Gamma(V,q)$ such that $sts^{-1}=N(t)t^{-1}$ with $N(s)=1$ and $s^2=\pm 1$.
We modify the element $s$ by a centraliser element and consider $s_1=e_0(e_1+d^{-1}f_1)(e_2+f_2)\cdots (e_r+f_r)\tilde s\in \Gamma^+(V,q)$ where $d=N(e_0)$.
Then $s_1\in Spin(V,q)$ and $s_1ts_1^{-1}=N(t)t^{-1}$ with $s^2=\pm 1$.
\end{proof}
\begin{corollary}
Let $n=2m+1$ with $m$ odd.
Let $t\in \Gamma^+(V,q)$ be an element of the standard torus $T$.
Then there exists $s\in\Gamma^+(V,q)$ such that $sts^{-1}=N(t)t^{-1}$ and $N(s)=1$, $s^2=(-1)^{\frac{m(m+1)}{2}}$.
\end{corollary}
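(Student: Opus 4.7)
The plan is to imitate the proof of Theorem~\ref{conjugatecliffordgp} in the maximal Witt basis, but to modify the conjugating element so it simultaneously lies in the even Clifford group, has norm $1$, and has the prescribed square. Using Lemma~\ref{torusincliffordgroup}, write $t=\lambda_0\prod_{i=1}^{m}(e_i+f_i)(e_i+\lambda_i f_i)$ and set $d=q(e_0)$. Following the trick used in Theorem~\ref{splittorus}, I take as candidate
\[
s \;:=\; e_0\,(e_1+d^{-1}f_1)\,\prod_{i=2}^{m}(e_i+f_i),
\]
a product of $m+1$ anisotropic vectors. Since $m$ is odd, $m+1$ is even, so $s\in\Gamma^+(V,q)$; and $N(s)=q(e_0)\cdot q(e_1+d^{-1}f_1)\cdot\prod_{i\ge 2} q(e_i+f_i)=d\cdot d^{-1}\cdot 1=1$, so in fact $s\in Spin(V,q)$. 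The correction coefficient $d^{-1}$ on the first factor is precisely what balances the extra norm $d$ introduced by $e_0$.

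Verification of $sts^{-1}=N(t)t^{-1}$ proceeds block by block. Since $e_0$ is orthogonal to every $e_i,f_i$, it anticommutes with each of them and hence commutes with every element of even Clifford degree in them; in particular $e_0$ commutes with $t$. For $i\ne j$, the length-one factor of $s$ indexed by $j$ commutes with the $i$-th block $u_i:=(e_i+f_i)(e_i+\lambda_i f_i)$ of $t$ for the same reason. Blocks with $i\ge 2$ are then handled by the identity from Lemma~\ref{negativeconjugate}, $(e_i+f_i)\,u_i\,(e_i+f_i)^{-1}=(e_i+\lambda_i f_i)(e_i+f_i)$. The only genuinely new computation is the $i=1$ block: using $\chi(e_1+d^{-1}f_1)$ acting on $\langle e_1,f_1\rangle$ by $e_1\mapsto d^{-1}f_1$, $f_1\mapsto de_1$, one checks that the conjugates of $(e_1+f_1)$ and $(e_1+\lambda_1 f_1)$ multiply to $(d^{-1}f_1+de_1)(d^{-1}f_1+\lambda_1 d e_1)=e_1 f_1+\lambda_1 f_1 e_1=(e_1+\lambda_1 f_1)(e_1+f_1)$, the two factors of $d$ cancelling. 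Reassembling the blocks and using $\tau_0(u_i)=N(u_i)u_i^{-1}$ yields $sts^{-1}=\lambda_0\prod_i(e_i+\lambda_i f_i)(e_i+f_i)=N(t)t^{-1}$.

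For the square, the vectors $v_0:=e_0$, $v_1:=e_1+d^{-1}f_1$, $v_i:=e_i+f_i$ ($i\ge 2$) are pairwise orthogonal in $V$, so they pairwise anticommute in $C(V,q)$. Reversing the product $s=v_0 v_1\cdots v_m$ contributes the sign $(-1)^{(m+1)m/2}$, and the squares of the factors multiply to $q(v_0)q(v_1)\cdots q(v_m)=d\cdot d^{-1}\cdot 1=1$, giving $s^2=(-1)^{m(m+1)/2}$, as required. The main obstacle is the $i=1$ block computation: it is what forces the correction coefficient to be exactly $d^{-1}$ and links the norm adjustment needed to kill $N(e_0)=d$ to the reversal identity $\tau_0(u)=N(u)u^{-1}$ that drives the whole strategy.
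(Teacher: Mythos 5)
Your proof is correct and follows essentially the same route the paper takes: the corollary is the special case $r=m$, $\tilde t$ trivial of the construction in Theorem~\ref{splittorus}, and your conjugating element $s=e_0(e_1+d^{-1}f_1)\prod_{i\ge 2}(e_i+f_i)$ is exactly the $s_1$ that appears there. You have simply carried out in full the block-by-block verification and the sign count for $s^2=(-1)^{m(m+1)/2}$ that the paper leaves implicit.
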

\begin{remark}\label{nonrealspin3} 
We can define a map from $SU(2)$ to $SO(3)$ by its action on its Lie algebra.
In $SU(2)$ we have semisimple elements which are not real (ref. to \cite{st2}, Remark 1 following Theorem 3.6.2).
But every semisimple element is real in $SO(3)$.
This gives an example that a real semisimple element in $SO(3)$ need not lift to a real element in the $Spin$ group.
\end{remark}

\section{Real Elements in $Spin(V,q)$}

In this section we classify real semisimple elements in $Spin(V,q)$.
We make a simple observation first.
\begin{proposition}
Let $G$ be a linear group, i.e., a subgroup of $GL_n$.
Let $t\in G$ be real.
Then there exists $s\in G$ with $s^2=\pm 1$ such that $sts^{-1}=t^{-1}$ if and only if $t=\tau_1\tau_2$ with $\tau_1^2=\pm 1=\tau_2^2$. 
\end{proposition}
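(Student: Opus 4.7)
The statement is a purely algebraic equivalence, so my plan is not to invoke any structure theory of $G$; the only substantive use of $G$ being linear is that $\pm 1$ are well-defined central elements squaring to $1$.

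For the ($\Rightarrow$) direction, I would start from an $s\in G$ with $s^2=\epsilon\in\{\pm 1\}$ and $sts^{-1}=t^{-1}$, and guess the decomposition $t=\tau_1\tau_2$ with $\tau_2:=s$ and $\tau_1:=ts^{-1}$. Then $\tau_2^2=\epsilon$ comes for free. The conjugation relation $sts^{-1}=t^{-1}$ rearranges (right-multiply by $s$, then left-multiply by $s^{-1}$, then right-multiply by $s^{-1}$) to the identity $ts^{-1}=s^{-1}t^{-1}$, from which
\[
\tau_1^2=(ts^{-1})(ts^{-1})=(s^{-1}t^{-1})(ts^{-1})=s^{-2}=\epsilon.
\]
Hence $\tau_1^2=\epsilon=\tau_2^2$ with a common sign, as required.

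For the ($\Leftarrow$) direction, I would assume $t=\tau_1\tau_2$ with $\tau_1^2=\tau_2^2=\epsilon\in\{\pm 1\}$, and simply take $s:=\tau_1$, so that $s^2=\epsilon=\pm 1$. The verification $sts^{-1}=t^{-1}$ reduces both sides to the same expression using the identities $\tau_i^{-1}=\epsilon\,\tau_i$: on the one hand
\[
sts^{-1}=\tau_1^2\,\tau_2\,\tau_1^{-1}=\epsilon\,\tau_2\,(\epsilon\,\tau_1)=\tau_2\tau_1,
\]
and on the other
\[
t^{-1}=\tau_2^{-1}\tau_1^{-1}=(\epsilon\,\tau_2)(\epsilon\,\tau_1)=\tau_2\tau_1.
\]

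There is essentially no obstacle in this argument, and the hypothesis that $t$ is real plays no active role beyond fixing the framework; the only substantive point is spotting, in the forward direction, that the pair $(ts^{-1},s)$ gives a valid decomposition and that the square identity for $\tau_1=ts^{-1}$ is forced by the conjugation relation rather than by any separate hypothesis on $t$.
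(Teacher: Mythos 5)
The paper states this proposition without a proof, calling it a ``simple observation,'' so there is nothing in the text to compare against; your argument is exactly the short direct verification that the author evidently has in mind, and it is correct. The one point worth flagging is the reading of the notation $\tau_1^2=\pm 1=\tau_2^2$: your backward direction uses a common sign $\epsilon$, i.e.\ $\tau_1^2=\tau_2^2=\epsilon$, and this is in fact necessary --- if one allowed $\tau_1^2=1$, $\tau_2^2=-1$ then $\tau_1 t\tau_1^{-1}=\tau_2\tau_1=-t^{-1}$, so $\tau_1$ only conjugates $t$ to $-t^{-1}$ and the implication would not go through for an arbitrary linear $G$. Your forward direction removes any worry on this score, since the decomposition $\tau_1=ts^{-1}$, $\tau_2=s$ you produce automatically has $\tau_1^2=s^{-2}=s^2=\tau_2^2$, showing that a real $t$ with a conjugating $s$ of square $\pm 1$ always admits a decomposition with matching sign; combined with the backward direction this gives the equivalence exactly as the paper intends it (and as it is spelled out in the remark following Theorem~\ref{st12}).
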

\noindent In the group $GL_n, SL_n, O_n(q), SO_n(q), Sp_n$ and groups of type $G_2$ every semisimple element can be decomposed in this way (Theorem~\ref{st12}).

We first collect the well known results over algebraically closed fields and finite fields.
When $F$ is algebraically closed field we have the exact sequence 
$$1 \rightarrow \{\pm 1\}\rightarrow Spin(V,q)\rightarrow SO(V,q)\rightarrow 1.$$
In view of this we have,
\begin{proposition}\label{realalgclosed}
Let $F$ be an algebraically closed field and $t\in Spin(V,q)$.
If $\dim(V)\equiv 0,1,3 \imod 4$ then $t$ has decomposition $t=\tau_1\tau_2$ where $\tau_1, \tau_2\in Spin(V,q)$ and $\tau_1^2=\pm 1=\tau_2^2$.
And if $t\in Spin(V,q)$ is a real semisimple element then it always has this decomposition.  
\end{proposition}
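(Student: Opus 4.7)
The plan is to reduce both halves to the proposition just stated: in $Spin(V,q)$, the decomposition $t=\tau_1\tau_2$ with $\tau_i^2=\pm 1$ is equivalent to the existence of $s\in Spin(V,q)$ with $s^2=\pm 1$ and $sts^{-1}=t^{-1}$. A direct check shows that the weaker hypothesis $sts^{-1}=-t^{-1}$ with $s^2=\pm 1$ also suffices, since then $\tau_1:=ts$, $\tau_2:=s^{-1}$ still yield $\tau_1^2=-s^2=\pm 1$ and $\tau_2^2=\pm 1$. Theorem~\ref{conjugatecliffordgp} produces such an $s$ in the ambient $\Gamma(V,q)$ whenever $t\in\Gamma^+(V,q)$ is semisimple; the task is to lift it into $\Gamma^+(V,q)\cap\ker(N)=Spin(V,q)$ when $N(t)=1$.

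For the first assertion, let $t\in Spin(V,q)$ be semisimple, so Theorem~\ref{conjugatecliffordgp} supplies $s\in\Gamma(V,q)$ with $N(s)=1$, $s^2=\pm 1$, and $sts^{-1}=t^{-1}$. When $m$ is even the same theorem places $s$ in $\Gamma^+(V,q)$, hence in $Spin(V,q)$; this settles $\dim V\equiv 0,1\pmod 4$. When $\dim V=2m+1$ with $m$ odd ($\dim V\equiv 3\pmod 4$), the produced $s$ has odd Clifford length, namely $\prod_{i=1}^{m}(e_i+f_i)$ in the fixed Witt basis. Rescale $e_0$ so that $q(e_0)=1$ (possible since $F=\bar F$) and put $s':=e_0 s$. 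Since $t$ is an even-length product in the vectors $e_i,f_i$ with $i\geq 1$, $e_0$ commutes with $t$ and anticommutes with $s$, so $s'\in\Gamma^+(V,q)$, $N(s')=q(e_0)=1$, $(s')^2=\pm 1$, and $s'ts'^{-1}=e_0t^{-1}e_0^{-1}=t^{-1}$. In every case the preceding proposition delivers the decomposition.

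For the second assertion the only new case is $\dim V=2m$ with $m$ odd. Since $t$ is real in $Spin(V,q)$, the image $\chi(t)\in SO(q)$ is real semisimple, and Theorem~\ref{2mod4real} forces $1$ or $-1$ to be an eigenvalue of $\chi(t)$. If $1$ is an eigenvalue, pick a non-isotropic $v\in V_1$ with $q(v)=1$; from $\chi(t)(v)=v$ one has $vt=tv$ in the Clifford algebra, and $v$ anticommutes with the odd-length $s$ of Theorem~\ref{conjugatecliffordgp}, so $s':=sv$ lies in $Spin(V,q)$, has $(s')^2=\pm 1$, and still conjugates $t$ to $t^{-1}$. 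If only $-1$ is an eigenvalue, invoke Remark~\ref{minuseigenvalue}: it supplies $s_1\in\Gamma^+(V,q)\cap\ker(N)=Spin(V,q)$ with $s_1^2=\pm 1$ and $s_1ts_1^{-1}=-t^{-1}$, and by the opening observation this is still enough to produce the decomposition.

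The main obstacle is keeping the conjugating element inside $Spin(V,q)$ rather than $\Gamma(V,q)$: in dimension $\equiv 3\pmod 4$ this is arranged via the extra basis vector $e_0$, while in dimension $\equiv 2\pmod 4$ it requires Theorem~\ref{2mod4real} to force a $\pm 1$ eigenvalue of $\chi(t)$ together with Remark~\ref{minuseigenvalue} in the subcase where only $-1$ appears.
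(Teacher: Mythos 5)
Your argument takes a genuinely different route from the paper's, but it has a real gap in the first assertion and a faulty step in the second.

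The paper proves Proposition~\ref{realalgclosed} by citing \cite{kn} (Theorem A): for $\dim V\equiv 0,1,3\pmod 4$ over an algebraically closed field, \emph{every} element of $SO(V,q)$ is a product of two involutions. Lifting each involution through the central extension $1\to\{\pm 1\}\to Spin(V,q)\to SO(V,q)\to 1$ gives $\tilde\tau_i$ with $\tilde\tau_i^2=\pm 1$, and $\tilde\tau_1\tilde\tau_2=\pm t$; adjusting one $\tilde\tau_i$ by $-1$ fixes the sign. This handles arbitrary $t$, not just semisimple $t$. Your proof of the first assertion instead invokes Theorem~\ref{conjugatecliffordgp}, which is stated only for \emph{semisimple} elements, so you have silently restricted the first claim to the semisimple case. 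That is a genuine gap: the Proposition's first sentence makes no semisimplicity hypothesis, and the Jordan decomposition does not obviously reduce the general case to the semisimple one here. You would need either to extend your Clifford-computation machinery to non-semisimple elements or to fall back on the $SO$-level result as the paper does.

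There is a second problem in the $\dim V\equiv 2\pmod 4$ case. You pick an anisotropic $v\in V_1$ with $q(v)=1$ and assert that $v$ anticommutes with the length-$m$ conjugator $s$ of Theorem~\ref{conjugatecliffordgp}. But $s$ is built as $\prod_i(e_i+f_i)$ over \emph{all} hyperbolic pairs, including those whose eigenvalue $\alpha_i$ equals $1$ and which therefore span part of $V_1$. A vector $v\in V_1$ need not be orthogonal to those $e_i+f_i$; for instance with $\alpha_1=\alpha_2=1$ the vector $v=e_1+f_1+e_2$ satisfies $B(v,e_1+f_1)=2\neq 0$, so $v(e_1+f_1)+(e_1+f_1)v=2$ is neither commutation nor anticommutation, and $(sv)^2$ is not obviously $\pm 1$. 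The fix is to run the Galois descent of Theorem~\ref{conjugatecliffordgp} on the $t$-invariant complement $V_1^{\perp}$ only, producing an $s$ supported on $V_1^{\perp}$, and then multiply by any anisotropic $v\in V_1$; orthogonality then genuinely forces anticommutation. As written, the step does not go through.

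Finally, a smaller remark: the subcase ``only $-1$ is an eigenvalue'' is vacuous over $\bar F$ by the Lemma following Theorem~\ref{realspin} (real in $Spin$ forces eigenvalue $1$ when $\dim V=2m$, $m$ odd), so handling it is unnecessary; and note that the decomposition you produce there has $\tau_1^2=-\tau_2^2$, which by itself does not certify realness of $t$, although that is not what the Proposition demands.
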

\begin{proof}
We recall that in the case of $\dim(V)\equiv 0,1,3 \imod 4$ every element of $SO(V,q)$ (and real semisimple element for all dimension) is a product of two involutions which combined with the above exact sequence implies the proposition (ref. \cite{kn}, Theorem A and \cite{st2} Theorem 3.4.6).
\end{proof}
\begin{proposition}\label{realfinitefield}
Let $F$ be a finite field and suppose $\dim(V)\equiv 0,1,3 \imod 4$.
Then semisimple elements in $Spin(V,q)$ are real.
\end{proposition}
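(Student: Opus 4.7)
For $\dim V \equiv 0, 1 \imod 4$, the statement is already contained in Corollary~\ref{real01spin}, which applies over any perfect field. The nontrivial case is $\dim V \equiv 3 \imod 4$, i.e.\ $\dim V = 2m+1$ with $m$ odd. Fix a semisimple $t \in Spin(V,q)$ and let $V_1$ denote the $1$-eigenspace of $\chi(t) \in SO(V,q)$. Since $\dim V$ is odd while the remaining eigenspaces (the $(-1)$-eigenspace together with the pairs $V_\lambda \oplus V_{\lambda^{-1}}$) aggregate to even dimension, $\dim V_1$ is odd and positive, so the hypotheses of Lemma~\ref{oddcase} are met. Using $N(t) = 1$ for $t \in Spin$, the existence of a conjugating element $s \in Spin(V,q)$ with $sts^{-1} = t^{-1}$ reduces to verifying the inclusion
$$ N(V_1) \subset N(\mathcal Z_{\Gamma^+(V,q)}(t)). $$

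My plan over the finite field $F$ is to establish the stronger statement $N(\mathcal Z_{\Gamma^+(V,q)}(t)) = F^*$, which makes the inclusion automatic. By Lang's theorem, $t$ is contained in some $F$-rational maximal torus $T$ of the connected reductive group $\Gamma^+(V,q)$; since $T$ is abelian and contains $t$, $T \subset \mathcal Z_{\Gamma^+(V,q)}(t)$, so it suffices to show $N \colon T(F) \to F^*$ is surjective. Consider the exact sequence of $F$-tori
$$ 1 \longrightarrow T \cap Spin(V,q) \longrightarrow T \xrightarrow{\;N\;} \mathbb G_m \longrightarrow 1. $$
The kernel $T \cap Spin(V,q)$ is a maximal torus of the simply connected semisimple group $Spin(V,q)$, and it is connected because the character $N|_T$ is primitive in $X^*(T)$: on the split torus of Lemma~\ref{torusincliffordgroup} one computes directly $N|_T = 2\chi_0 + \chi_1 + \cdots + \chi_m$, whose coefficient vector $(2, 1, \ldots, 1)$ has $\gcd$ equal to $1$. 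Lang's theorem applied to this connected kernel gives surjectivity $T(F) \twoheadrightarrow F^*$ on rational points, and Lemma~\ref{oddcase} then produces the sought $s \in Spin(V,q)$, showing $t$ is real.

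The main technical ingredient is the primitivity of $N$ in the character lattice, which secures the connectedness of $T \cap Spin(V,q)$ and thus the applicability of Lang's theorem. Without this, the potential obstruction living in $F^*/(F^*)^2$ would block the lift of reality from $O'(V,q)$ to the cover $Spin(V,q)$; here the connected-kernel property over a finite field eliminates that obstruction uniformly across all $F$-rational maximal tori.
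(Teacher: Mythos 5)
Your proof is correct, but it takes a genuinely different route from the paper's. The paper's proof is a one-line citation: it invokes \cite{tz}, Proposition 3.1(ii), observing that $-1$ lies in the Weyl group of $Spin(V,q)$ precisely when $\dim V\equiv 0,1,3\imod 4$, which gives reality of semisimple elements over finite fields directly. You instead dispose of the cases $\dim V\equiv 0,1\imod 4$ via Corollary~\ref{real01spin} (valid over any perfect field) and then give a self-contained argument for $\dim V\equiv 3\imod 4$ through Lemma~\ref{oddcase}: reduce to $N(V_1)\subset N(\mathcal Z_{\Gamma^+(V,q)}(t))$, place $t$ in an $F$-rational maximal torus $T\subset\mathcal Z_{\Gamma^+}(t)$, and show $N\colon T(F)\to F^*$ is onto by checking that $N|_T = 2\chi_0+\chi_1+\cdots+\chi_m$ is primitive in $X^*(T)$ (so $\ker(N|_T)=T\cap Spin$ is a connected torus) and applying Lang's theorem to the resulting short exact sequence of tori. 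This is more work than the paper's citation, but it has the merit of making the mechanism explicit: the obstruction to lifting reality from $O'(V,q)$ to $Spin(V,q)$ sits in the cokernel of $N$ on the centraliser, and Lang's theorem kills it over finite fields because the relevant kernel is connected. Your primitivity computation is the key technical observation and is correct (the character is invariant under conjugation, so primitivity holds for every maximal torus, not just the split one used to compute it); and the observation that $\dim V_1$ must be odd, hence $\geq 1$, correctly puts you in the setting of Lemma~\ref{oddcase}.
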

\begin{proof}
This follows from \cite{tz} Proposition 3.1 (ii) by noting that $-1$ belongs to the Weyl group.
\end{proof}

Now we consider $F$ any perfect field of characteristic $\neq 2$.
\begin{theorem}\label{realspin}
Suppose $\dim(V)=0,1,2\imod 4$.
Let $t\in Spin(V,q)$ be a semisimple element.
Then $t$ is real in $Spin(V,q)$ if and only if $t=\tau_1\tau_2$ where $\tau_1^2=\pm 1=\tau_2^2$ and $\tau_1,\tau_2\in Spin(V,q)$.
\end{theorem}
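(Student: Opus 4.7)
\emph{Proof plan.}
The $(\Leftarrow)$ direction is immediate from the Proposition: given $t=\tau_1\tau_2$ with $\tau_i^2=\pm 1$ in $Spin(V,q)$, the Proposition supplies $s\in Spin(V,q)$ with $s^2=\pm 1$ and $sts^{-1}=t^{-1}$, so $t$ is real. For $(\Rightarrow)$, the same Proposition reduces the task to finding $s\in Spin(V,q)$ with $s^2=\pm 1$ and $sts^{-1}=t^{-1}$.

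When $\dim V\equiv 0,1\imod 4$, the half-dimension $m$ is even, so the conjugator produced by Theorem~\ref{conjugatecliffordgp} already lies in $\Gamma^+(V,q)\cap\ker N=Spin(V,q)$, has $s^2=\pm 1$, and satisfies $sts^{-1}=N(t)t^{-1}=t^{-1}$ since $N(t)=1$.

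The substantive case is $\dim V=2m$ with $m$ odd. Since $t$ real in $Spin$ forces $\bar t=\chi(t)$ to be real in $SO(V,q)$, Theorem~\ref{2mod4real} guarantees that $\bar t$ has $1$ or $-1$ as an eigenvalue. I would first argue that $1$ must be an eigenvalue: if instead $V_1=0$, decompose $V=V_{-1}\oplus W$ and write $t=\lambda\,\omega_{V_{-1}}t_W$, with $\omega_{V_{-1}}\in\Gamma^+(V_{-1})$ a lift of $-I_{V_{-1}}$ and $t_W\in\Gamma^+(W)$ a lift of $\bar t|_W$. Commuting with $\omega_{V_{-1}}$ forces an element of $C(V,q)$ to have an even number of $V_{-1}$-factors, while commuting with each $t_\lambda$ ($\lambda\ne\pm 1$) forces even parity in $W_\lambda$; together these give $\mathcal Z_{\Gamma(V,q)}(t)\subseteq\Gamma^+(V,q)$. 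On the other hand Theorem~\ref{conjugatecliffordgp} produces an odd-length $s_0\in\Gamma(V,q)\setminus\Gamma^+(V,q)$ with $s_0ts_0^{-1}=t^{-1}$, and any other conjugator has the form $s_0\cdot z$ with $z\in\mathcal Z_\Gamma(t)\subseteq\Gamma^+$, hence is again of odd length; so no conjugator lies in $Spin\subseteq\Gamma^+$, contradicting reality in $Spin$.

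Hence $\bar t$ has $1$ as an eigenvalue. Pick $e_0\in V_1$ with $q(e_0)\ne 0$ and set $V':=e_0^\perp$, so $\dim V'=2m-1=2m'+1$ with $m'=m-1$ even. The subalgebra $\Gamma^+(V')\subseteq\Gamma^+(V,q)$ maps onto the $SO(V,q)$-stabilizer of $e_0$ with kernel $F^*\subseteq\Gamma^+(V')$; since $\bar t$ fixes $e_0$, we conclude $t\in\Gamma^+(V')$. Applying Theorem~\ref{conjugatecliffordgp} inside $\Gamma^+(V')$---where the half-dimension $m'$ is now even---yields a conjugator $s\in\Gamma^+(V')\cap\ker N=Spin(V')\subseteq Spin(V,q)$ with $s^2=\pm 1$ and $sts^{-1}=t^{-1}$; the Proposition then produces the desired decomposition. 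The main obstacle is the centralizer computation in the $V_1=0$ sub-case that rules it out; once that is in place, the constructive half is a clean re-application of Theorem~\ref{conjugatecliffordgp} on the codimension-$1$ subspace $V'$.
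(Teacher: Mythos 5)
Your proof is correct and covers all three residue classes, but in the crucial $\dim V\equiv 2\pmod 4$ case you take a genuinely different route from the paper in both halves of the argument. The paper establishes that ``$t$ real in $Spin$ forces $1\in\operatorname{spec}\chi(t)$'' by combining Remark~\ref{minuseigenvalue} (producing $s\in Spin$ with $sts^{-1}=-t^{-1}$) with the reality hypothesis to get $t$ conjugate to $-t$ in $Spin$, and then invoking Theorem 2A of Fong--Srinivasan as a black box to derive the contradiction. You instead give a self-contained parity argument: when $V_1=0$ every element of $\mathcal Z_{\Gamma(V,q)}(t)$ is even, so the odd conjugator $s_0$ from Theorem~\ref{conjugatecliffordgp} generates the full coset of conjugators $s_0\mathcal Z_\Gamma(t)\subset\Gamma(V,q)\setminus\Gamma^+(V,q)$, leaving nothing in $Spin$. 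This is an attractive alternative, though your one-sentence justification of $\mathcal Z_\Gamma(t)\subseteq\Gamma^+$ elides a needed observation: an element centralizing $t$ only conjugates each factor $\omega_{V_{-1}}$, $t_\lambda$ to $\pm$~itself with \emph{product} of signs equal to $+1$, and you must first argue that the $t_\lambda$-signs are automatically $+1$ (e.g.\ because the relevant $SO(W_\lambda)$-centralizers are connected $GL$'s, lifting to connected subgroups of $\Gamma^+(W_\lambda)$) before you can conclude the $V_{-1}$-parity is even. For the constructive half you restrict to the codimension-one subspace $V'=e_0^\perp$ where the half-dimension $m'=m-1$ is even and apply Theorem~\ref{conjugatecliffordgp} there; the paper instead appeals directly to the proof of that theorem over $V$ using $\dim V_1$ even. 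Your codimension-one reduction is arguably cleaner, since it lets you cite the ``moreover'' clause of Theorem~\ref{conjugatecliffordgp} as a black box rather than reopening its proof.
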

\begin{lemma}
Let $\dim(V)=2m$ and $m$ odd.
Then a semisimple element $t\in Spin(V,q)$ is real in $Spin(V,q)$ if and only if $1$ is an eigenvalue of $\chi(t)$.
\end{lemma}
\begin{proof}
Suppose $1$ is an eigenvalue.
Since $\dim(V)$ is even, $dim(V_1)$ is also even.
In this case the proof of Theorem~\ref{conjugatecliffordgp} produces the required element.

Now suppose $t$ is real.
Then $\chi(t)$ is real in $SO(q)$.
Which is if and only if $1$ or $-1$ is an eigenvalue of $\chi(t)$.
Suppose $1$ is not an eigenvalue of $\chi(t)$.
Since $-1$ is an eigenvalue the eigenspace $V_{-1}$ is defined over $F$.
Then from Remark~\ref{minuseigenvalue} we have an element $s\in Spin(V,q)$ such that $sts^{-1}=-t^{-1}$.
Since $t$ is real, we get $t$ is conjugate to $-t$ in $Spin(V,q)$.
But from Theorem 2A, in \cite{fs},  $t$ is conjugate to $-t$ in $Spin(V,q)$ if and only if $1$ and $-1$ both are eigenvalues of $\chi(t)$, a contradiction.
\end{proof}
\noindent Notice that, in this case, a semisimple element in $SO(q)$ is real (see Theorem~\ref{2mod4real}) if and only if $1$ or $-1$ is an eigen value.
But only those real elements which have eigen value $1$ can be lifted to a real element in $Spin(V,q)$.

\begin{proof}[Proof of Theorem~\ref{realspin}]
If $\dim(V)=0,1\imod 4$ then every semisimple element has this property (see Corollary~\ref{real01spin}).
If $\dim(V)=2\imod 4$ the result follows from the Lemma above.
\end{proof}

\begin{corollary}\label{counterexample}
With notation as above, if $\dim(V)\equiv 0,1,2\imod 8$ then any real semisimple element $t\in Spin(V,q)$ can be written as $t=\tau_1\tau_2$ where $\tau_1^2=1=\tau_2^2$ and if $\dim(V)\equiv 4,5,6\imod 8$ then $t$ can be written as $t=\tau_1\tau_2$ where $\tau_1^2=\pm 1=\tau_2^2$.
\end{corollary}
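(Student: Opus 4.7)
The plan is to refine Theorem~\ref{realspin} by tracking the sign of $\tau_i^2$ through the explicit constructions of Section~5 and the lemma preceding the proof of Theorem~\ref{realspin}. If $t\in Spin(V,q)$ is a real semisimple element, then those results supply an $s\in Spin(V,q)$ with $sts^{-1}=t^{-1}$ and $s^2=\epsilon\in\{\pm 1\}$. Setting $\tau_1:=ts$ and $\tau_2:=s^{-1}$ gives the factorization $t=\tau_1\tau_2$, and
\[
\tau_1^2 = tsts = t\cdot (sts^{-1})\cdot s^2 = t\cdot t^{-1}\cdot \epsilon = \epsilon,\qquad
\tau_2^2 = (s^2)^{-1} = \epsilon
\]
(using $\epsilon=\pm 1$). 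Thus $\tau_1^2=\tau_2^2=\epsilon$, and the corollary reduces to pinning down $\epsilon$ as a function of $\dim V$ modulo $8$.

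For the computation of $\epsilon=s^2$ I would appeal directly to the formulas proved earlier. In the ranges $\dim V\equiv 0,4\pmod{8}$ (so $\dim V=2m$ with $m$ even) and $\dim V\equiv 1,5\pmod{8}$ (so $\dim V=2m+1$ with $m$ even), Theorem~\ref{conjugatecliffordgp} places $s$ inside $\Gamma^+(V,q)\cap\ker(N)=Spin(V,q)$ with $s^2=(-1)^{m(m-1)/2}$. In the remaining ranges $\dim V\equiv 2,6\pmod{8}$ covered by the corollary, the lemma preceding the proof of Theorem~\ref{realspin} forces $1$ to be an eigenvalue of $\chi(t)$; one then produces $s\in Spin(V,q)$ by truncating the standard product $\prod(e_i+f_i)$ to the part of $V$ orthogonal to the eigenspace $V_1$ and, if needed, inserting vectors from $V_1$ to achieve even length while keeping $N(s)=1$. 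A case-by-case reduction of $m(m-1)/2\pmod{2}$ then delivers $\epsilon=+1$ exactly when $\dim V\equiv 0,1,2\pmod{8}$ and $\epsilon=-1$ exactly when $\dim V\equiv 4,5,6\pmod{8}$, which gives both halves of the corollary.

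The step I expect to be the main obstacle is the sign bookkeeping in the case $\dim V\equiv 2,6\pmod{8}$, where the conjugator furnished by Theorem~\ref{conjugatecliffordgp} has odd length and lies in $\Gamma(V,q)\setminus\Gamma^+(V,q)$. Any adjustment of $s$ into $Spin(V,q)$ via vectors from $V_1$ alters $s^2$ in a way controlled by the Clifford anticommutation relations, and one must check that the number and norms of the inserted vectors can always be chosen so that the resulting sign still matches $(-1)^{m(m-1)/2}$. Once this verification is carried out, tabulation completes the proof.
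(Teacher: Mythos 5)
Your reduction is exactly the one the paper intends: if $sts^{-1}=t^{-1}$ with $s\in Spin(V,q)$ and $s^2=\epsilon\in\{\pm1\}$, then $\tau_1=ts$, $\tau_2=s^{-1}$ give $t=\tau_1\tau_2$ with $\tau_1^2=\tau_2^2=\epsilon$, and the corollary reduces to reading off $\epsilon$ from the conjugator constructed in Theorem~\ref{conjugatecliffordgp} (with sign $(-1)^{m(m-1)/2}$) and the lemma preceding Theorem~\ref{realspin}. The mod-$8$ tabulation you give is also correct: $m(m-1)/2$ is even for $m\equiv 0,1\pmod 4$ (giving $\dim V\equiv 0,1,2\pmod 8$, $\epsilon=1$) and odd for $m\equiv 2,3\pmod 4$ (giving $\dim V\equiv 4,5,6\pmod 8$, $\epsilon=-1$). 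This is the same route the paper takes, since the corollary is presented as an immediate consequence of those earlier results.

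One remark on the step you flag as the main obstacle. In the range $\dim V\equiv 2\pmod 8$ (so $\dim V=2m$, $m$ odd) a bare truncation of $s$ to the complement of $V_1$ does \emph{not} automatically give $s^2=(-1)^{m(m-1)/2}$: with $\dim V_1=2k_1$ the truncated element has length $\ell=m-k_1$ and square $(-1)^{\ell(\ell-1)/2}$, which differs from $(-1)^{m(m-1)/2}$ by $(-1)^{\ell k_1+k_1(k_1-1)/2}$ and so can carry the wrong sign (e.g.\ $m=5$, $k_1=3$). To obtain the sign the corollary claims one must pad $s$ back up using mutually orthogonal vectors of $V_1$ whose norms multiply to a square, so that $N(s)$ stays in $(F^*)^2$ and the sign toggles the right number of times; this is exactly the verification you defer. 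The paper itself does not spell this out (the corollary is stated without proof, and the preceding lemma just says ``the proof of Theorem~\ref{conjugatecliffordgp} produces the required element''), so your proposal is as complete as the paper's implicit argument, but the padding step is the genuinely delicate point and would need to be done in full in a self-contained writeup.
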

\noindent The group $Spin(4,\mathbb C)\cong SL_2(\mathbb C)\times SL_2(\mathbb C)$ and $Spin(5,\mathbb C)\cong Sp_2(\mathbb C)$ and in both of these groups there are semisimple elements which are not product of two involutions but have decomposition $t=\tau_1\tau_2$ where $\tau_i^2=-1$ (see~\cite{st2} Lemma 3.2.1 and Lemma 3.5.2). 
In the case $\dim(V)\equiv 2 \imod 4$ we have $Spin(6,\mathbb C)\cong SL_4(\mathbb C)$. 
We consider the covering map $\rho\colon SL_4(\mathbb C)\rightarrow SO(\Lambda^2\mathbb C^4,B)\cong SO(6)$ defined in~\cite{gw} exercise 5 in section 2.3.5.
With respect to appropriate basis of  $SO(\Lambda^2\mathbb C^4,B)$, say, $\{e_1\wedge e_2,e_3\wedge e_4, -e_1\wedge e_3, e_2\wedge e_4, e_1\wedge e_4, e_2\wedge e_3\}$ we have the diagonal torus in $SO(\Lambda^2\mathbb C^4,B)$.
The semisimple element $\diag(\mu_1\mu_2\mu_3,\mu_1\mu_2^{-1}\mu_3^{-1},\mu_1^{-1}\mu_2\mu_3^{-1}, \mu_1^{-1}\mu_2^{-1}\mu_3)$ in $SL_4(\mathbb C)$ maps to $\diag(\mu_1^2,\mu_1^{-2}, \mu_2^2, \mu_2^{-2}, \mu_3^2, \mu_3^{-2} )$.
We see that the element $\diag(1,1,a^2,a^{-2},b^2,b^{-2})$ in $SO(6)$ lifts to $\diag(ab,a^{-1}b^{-1},ab^{-1},a^{-1}b)$ in $SL_4(\mathbb C)$ which is real and in fact product of two involutions.
The element $\diag(-1,-1,-1,-1,a^2,a^{-2})$ for $a^2\neq 1$ in $SO(6)$ lifts to $\diag(-a,a^{-1},a^{-1},-a)$ in $SL_4(\mathbb C)$ which is not real.

Now we consider the case when $\dim(V)=2m+1$ where $m$ is odd.
In this case we know that all semisimple elements are real over algebraically closed field and finite field (Proposition~\ref{realalgclosed} and \ref{realfinitefield}) but not necessarily over arbitrary $F$ (see remark~\ref{nonrealspin3}).
However we have,
\begin{proposition}
Let $t$ be either a strongly regular element or a semisimple element in  a split torus. 
Then $t$ is real if and only if it has decomposition $t=\tau_1\tau_2$ with $\tau_i^2=\pm 1$.
\end{proposition}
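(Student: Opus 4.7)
The easy direction is direct: if $\tau_1^2=\tau_2^2=\epsilon\in\{\pm 1\}$ and $t=\tau_1\tau_2$, then $\tau_1 t\tau_1^{-1}=\tau_1^2\tau_2\tau_1^{-1}=\epsilon\tau_2\cdot\epsilon\tau_1=\tau_2\tau_1=t^{-1}$. For the converse, the Proposition at the start of this section reduces the task to producing a single $\tau\in Spin(V,q)$ with $\tau^2=\pm 1$ and $\tau t\tau^{-1}=t^{-1}$.

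My plan is to enhance the element $s$ furnished by Theorem~\ref{conjugatecliffordgp}, which satisfies $N(s)=1$, $s^2=(-1)^{m(m-1)/2}$, and $sts^{-1}=N(t)t^{-1}=t^{-1}$ (using $N(t)=1$ for $t\in Spin(V,q)$). Since $\dim(V)=2m+1$ with $m$ odd, this $s$ is a product of an odd number of vectors and hence lies in $\Gamma(V,q)\setminus\Gamma^+(V,q)$. Pick a nonzero $e_0$ in the $F$-rational $1$-eigenspace $V_1$ of $\chi(t)$ (always nonempty in odd dimension), set $d=q(e_0)$, and try $\tau=e_0 sy$ with $y$ in the centralizer of $t$ in $\Gamma^+(V,q)$. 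Because $\chi(t)e_0=e_0$ forces $e_0 t=te_0$ and $y$ centralises $t$, we get $\tau t\tau^{-1}=t^{-1}$ automatically; and $N(\tau)=d\cdot N(y)$, so $\tau\in Spin(V,q)$ precisely when $N(y)=d^{-1}$.

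Existence of such a $y$ is where the two hypotheses split. If $t$ lies in a split torus then, after conjugation, $t$ is in the standard torus $T$ of Lemma~\ref{torusincliffordgroup}, and the explicit parametrisation gives $N(T)=F^*$, so any $y\in T$ of the right norm works. If instead $t$ is strongly regular, the centralizer equals the unique maximal torus $T_t$ containing $t$; realness of $t$ in $Spin(V,q)$ supplies a norm-$1$ conjugator in $\Gamma^+(V,q)$, whence Lemma~\ref{oddcase} forces $N(V_1)\subset N(T_t)$, and since $\dim V_1=1$ gives $N(V_1)=d(F^*)^2$, we find $d^{-1}\in N(T_t)$. In both cases the connected torus through $y$ acts trivially on the line $V_1$ (the only scalar action in $SO(q)$ compatible with connectedness being the identity), so $y$ commutes with $e_0$.

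It remains to check $\tau^2=\pm 1$. Since $s$ is a product of $m$ vectors each orthogonal to $V_1$, the vector $e_0$ anticommutes with $s$; together with $ye_0=e_0 y$ this gives $\tau^2=e_0 se_0\cdot ysy=-d\cdot (sy)^2$. The heart of the computation is the identity
\[
sys^{-1}=N(y)\,y^{-1}\qquad\text{for every $y$ in the centralizer torus,}
\]
which combined with $s^2=(-1)^{m(m-1)/2}$ yields $(sy)^2=s^2\cdot N(y)$, and hence $\tau^2=-d\cdot s^2\cdot N(y)=-d\cdot s^2\cdot d^{-1}=-s^2=\pm 1$. This identity is the main obstacle. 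In the split-torus case it reduces to the direct calculation $(e_i+f_i)\,u_i\,(e_i+f_i)=\mu_i\,u_i^{-1}=N(u_i)\,u_i^{-1}$ on each torus generator $u_i=(e_i+f_i)(e_i+\mu_i f_i)$, together with the fact that $(e_i+f_i)$ commutes with $u_j$ for $j\neq i$. In the strongly regular case the element $s$ of Theorem~\ref{conjugatecliffordgp} arises from Galois descent and has no explicit product form over $F$; however, both $y\mapsto sys^{-1}$ and $y\mapsto N(y)y^{-1}$ are $F$-morphisms of algebraic tori $T_t\to T_t$, and after base change to $\bar F$ the torus $\bar T_t$ becomes the standard split torus in a Witt basis diagonalising $\chi(\bar t)$, where the split-case identity applies and descends to $F$.
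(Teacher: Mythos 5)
Your proposal is correct and follows essentially the same route as the paper: both take the modified conjugator $\tilde s = e_0 s y$ of Lemma~\ref{oddcase} (or its split-torus counterpart in Theorem~\ref{splittorus}), with $s$ the length-$m$ element from Theorem~\ref{conjugatecliffordgp} and $y$ a centralizer element of norm $d^{-1}$, and reduce to verifying $\tilde s^2 = \pm 1$. The paper merely asserts this with a bare ``Hence''; you supply the actual computation --- the anticommutation of $e_0$ with $s$, the commutation of $y$ with $e_0$ (from $\dim V_1 = 1$ forcing $\chi(y)|_{V_1} = 1$), and the key identity $sys^{-1} = N(y)y^{-1}$ on the centralizer torus --- and you correctly extend that identity to the Galois-descended $s$ of the strongly regular case by observing that both sides are $F$-morphisms of tori which agree over $\bar F$ in a Witt basis diagonalising $\chi(\bar t)$. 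This is precisely the content the paper leaves implicit, so the proposal fleshes out rather than deviates from the intended argument.
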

\begin{proof}
Notice that the centraliser in this case, is the maximal torus containing it.
Hence the element $\tilde s$ in Lemma~\ref{oddcase} satisfies $\tilde s^2=\pm 1$.
If $t$ belongs to a split torus then it has an  eigen value over $F$ and the result follows by Theorem~\ref{splittorus}. 
\end{proof}
\noindent However, it is not clear what is the structure of a real semisimple element over $F$ in this case.


\end{document}